\newtheorem{thm}{Theorem}[section]
\newtheorem{cor}[thm]{Corollary}
\newtheorem{prop}[thm]{Proposition}
\newtheorem{lem}[thm]{Lemma}
\theoremstyle{definition}
\newtheorem{dfn}[thm]{Definition}
\theoremstyle{remark}
\newtheorem{rem}[thm]{Remark}
\newtheorem{problem}[thm]{Problem}
\newtheorem{ex}[thm]{Example}
\newcommand{\R}{\mathbb{R}}
\newcommand{\Int}{\operatorname{Int}}
\renewcommand{\tilde}{\widetilde}
\def\spmapright#1{\smash{%
 \mathop{\hbox to 1.3cm{\rightarrowfill}}
  \limits^{#1}}}
\def\spmapleft#1{\smash{%
 \mathop{\hbox to 1.3cm{\leftarrowfill}}
  \limits^{#1}}}
\title[Reeb spaces of smooth functions on manifolds]
{Reeb spaces of smooth functions on manifolds}
\dedicatory{Dedicated to Professor Toshizumi Fukui
on the occasion of his 60th birthday}
\author{Osamu Saeki} 
\address{Institute of Mathematics for Industry,
Kyushu University,
Motooka 744, Nishi-ku, Fukuoka 819-0395, Japan}
\email{saeki@imi.kyushu-u.ac.jp}
\date{\today}
\keywords{Smooth function, Reeb space,
Reeb graph, finite critical values}
\subjclass[2010]{Primary 58K05;
Secondary 58K30,
57R45,
57R70}
\begin{document}
\begin{abstract}
The Reeb space of a continuous function
is the space of connected components 
of the level sets.
In this paper we first prove that the Reeb space
of a smooth function on a closed
manifold with finitely many critical values
has the structure of a finite graph without
loops.
We also show that an arbitrary finite graph 
without loops can be realized as the Reeb space
of a certain smooth function on a closed manifold
with finitely many critical values, where
the corresponding level sets can also be preassigned.
Finally, we show that a continuous map of a smooth closed
connected manifold to a finite connected
graph without loops that induces
an epimorphism between the fundamental groups is
identified with the natural quotient map to the Reeb space
of a certain smooth function with finitely many
critical values, up to homotopy.
\end{abstract}

\maketitle

\section{Introduction}\label{section1}

In this article, we prove three theorems on
Reeb spaces of smooth functions on compact manifolds
of dimension $m \geq 2$
that have finitely many \emph{critical values}.
Here, the \emph{Reeb space} is the space of
connected components of the level sets, endowed with
the quotient topology (for a precise definition,
see \S\ref{section2}).

The first theorem (Theorem~\ref{thm:Reeb})
states that the Reeb space of such a function always
has the structure of a finite (multi-)graph without loops.
Many of the results in the literature concern Morse
functions or smooth functions with finitely
many \emph{critical points} \cite{Izar1, Izar2, Izar3, Izar4, Izar5,
R, Sh}, and our theorem
generalizes them. Note that the graph structure of the
Reeb space of such a smooth function with finitely
many critical values satisfies that the vertices correspond exactly to
the connected components of level sets that contain critical points.
In the literature, a Reeb space is often called a 
\emph{Reeb graph}, and
our theorem justifies the terminology.
We note that the same result also holds for every smooth function
on an arbitrary compact manifold of dimension $m \geq 2$ possibly with boundary
provided that the function and its restriction
to the boundary both have finitely many critical values.

The second result of this paper is a realization
theorem (Theorem~\ref{thm2}). We show that, for each $m \geq 2$, 
an arbitrary finite (multi)-graph without loops
can be realized as the Reeb space of a smooth function on a 
closed $m$--dimensional manifold with finitely many critical values.
Our result is even stronger: we can preassign the diffeomorphism types of 
the components
of level sets for points in the graph. More precisely, to each edge we assign a
closed connected $(m-1)$--dimensional manifold and to each vertex
a compact connected $m$--dimensional manifold so that certain 
consistency conditions are satisfied:
then, a graph with such a pre-assignment, called
an $m$--decorated graph, can always be realized as the Reeb space
of a smooth function with finitely many critical values on a closed
$m$--dimensional manifold in such a way that
a point on an edge corresponds to the preassigned $(m-1)$--dimensional manifold
and a vertex corresponds to the preassigned $m$--dimensional manifold.

The third result of this paper also concerns the realization of a graph
as the Reeb space (Theorem~\ref{thm3}). 
In our second result, the source manifold
on which the function is constructed is not preassigned. On the other
hand, in our third theorem, we fix the source closed
connected manifold $M$ of dimension
$m \geq 2$ and
first consider a continuous map of $M$ into a connected (multi-)graph 
without loops that induces an epimorphism between the fundamental groups.
Then, we show that such a map is homotopic to the quotient map
to the Reeb space
of a smooth function on $M$ with finitely many critical values,
where the Reeb space is identified with the given graph.
Note that the condition on the fundamental group is known to be necessary.
A similar result has been obtained by
Michalak \cite{M1, M2} (see also Gelbukh \cite{G3},
Marzantowicz and Michalak \cite{MM}):
for $m \geq 3$, one can realize
a given graph as the Reeb space of a Morse function on a closed
$m$--dimensional manifold
up to homeomorphism.
Our theorem is slightly different from such results
in that we not only realize the topological
structure of a given graph but we also 
realize the given graph structure.
We construct smooth functions with finitely
many critical values such that the images by the quotient map
of the level set components containing critical points 
exactly coincide with the vertices of the given graph.

The paper is organized as follows. In \S\ref{section2}, we review the
definition and certain properties of Reeb spaces, and present
the problems addressed in this paper. In \S\ref{section3}, we prove that
the Reeb space of a smooth function on a closed manifold with
finitely many critical values is a graph. The key to the
proof is Lemma~\ref{lem2}, which guarantees that the
number of connected components of a level set
corresponding to an isolated critical value is always finite.
We use some general topology techniques to prove this lemma.
In \S\ref{section3.5}, we introduce the notion of a path
Reeb space, which is the space of all path-components of
level sets. We show that for the path Reeb space,
our first theorem does not hold in general: in fact,
even for a smooth function on a closed manifold with finitely
many critical values, the path Reeb space may not be a
$T_1$--space, and hence may not have the structure of a graph.
However, we also show that the path Reeb space and
the usual Reeb space coincide with each other for an arbitrary
smooth function on a closed manifold with finitely many critical points. 
In \S\ref{section4}, we prove the second theorem. In order to 
construct a desired function on a manifold, we first
construct non-singular functions corresponding to edges
and constant functions corresponding to the vertices.
Then, we glue them together ``smoothly'' using our consistency condition.
In \S\ref{section5}, we prove the third theorem. By using the assumption
on the fundamental groups, we first find $(m-1)$--dimensional
submanifolds in the source manifold $M$ that correspond
to the edges using surgery techniques. Then, we use
the techniques employed in the proof of the second theorem.

Some of the results in this paper have been announced in \cite{Sa20}.

Throughout the paper, 
all manifolds and maps between them are smooth
of class $C^\infty$ unless otherwise specified. 
The symbol ``$\cong$'' denotes a diffeomorphism between
smooth manifolds.

\section{Reeb space}\label{section2}


Let $f : X \to Y$ be a continuous map
between topological spaces.
For two points $x_0, x_1 \in X$, we write
$x_0 \sim x_1$ if $f(x_0) = f(x_1)$ and $x_0, x_1$
lie on the same connected component
of $f^{-1}(f(x_0)) = f^{-1}(f(x_1))$.
Let $W_f = X/\!\sim$ be the quotient space 
with respect to this equivalence relation: i.e.\ $W_f$
is a topological space endowed with the quotient topology.
Let $q_f : X \to W_f$ denote the quotient map.
Then, there exists a unique map $\bar{f} : W_f \to Y$ that 
is continuous and makes the 
following diagram commutative:
\begin{eqnarray*}
X \!\!\!\! & \spmapright{f} & \!\!\!\! Y \\
& {}_{q_f}\!\!\searrow \quad \qquad \nearrow_{\bar{f}} & \\
& \,W_f. &
\end{eqnarray*}
The space $W_f$ is called the
\emph{Reeb space} of $f$, and
the map $\bar{f} : W_f \to Y$ is called
the \emph{Reeb map} of $f$. The decomposition
of $f$ as $\bar{f} \circ q_f$ as in the above
commutative diagram is called the \emph{Stein
factorization} of $f$ \cite{L1}. For a schematic example,
see Figure~\ref{fig1}.

\begin{figure}[h]
\centering
\psfrag{f}{$f$}
\psfrag{M}{$X$}
\psfrag{N}{$Y$}
\psfrag{W}{$W_f$}
\psfrag{bf}{$\bar{f}$}
\psfrag{q}{$q_f$}
\includegraphics[width=0.9\linewidth,height=0.4\textheight,
keepaspectratio]{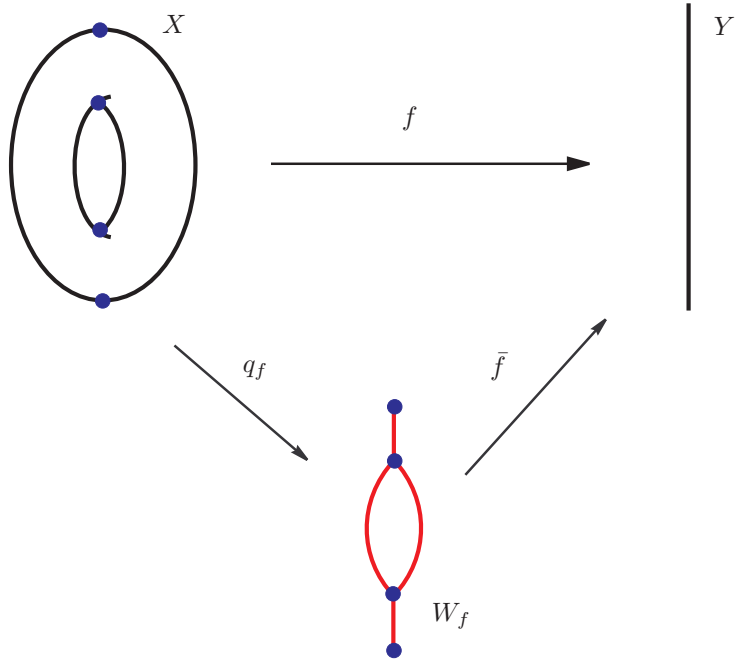}
\caption{Example of a Reeb space and a Stein factorization}
\label{fig1}
\end{figure}

In this article, a smooth real-valued
function on a manifold is called a \emph{Morse
function} if its critical points are
all non-degenerate. Such a Morse function
is \emph{simple} if its restriction to
the set of critical points is injective.

Furthermore, in the following, a 
\emph{graph} means a finite ``multi-graph''
which may contain multi-edges and/or loops.
When considered as a topological space, it
is a compact $1$--dimensional
CW complex. A manifold is \emph{closed}
if it is compact and has no boundary.

It is known that the Reeb space of a
Morse function
on a smooth closed manifold
has the structure of a graph, 
which is often called a 
\emph{Reeb graph} \cite{R} or is sometimes
called a \emph{Kronrod--Reeb graph} \cite{Sh}.
This fact has been first stated in \cite{R} without proof.
A proof for simple Morse functions can be found
in \cite[Teorema~2.1]{Izar3} (see also \cite{Izar1}--\cite{Izar5}).

If we use several known results, this fact for
Morse functions
can also be proved as follows, for example.
First, Morse functions on closed manifolds are triangulable
(for example, by a result of Shiota \cite{Sh}).
Then, by \cite{HS}, its Stein factorization is triangulable
and consequently the quotient space, i.e.\ the
Reeb space, is a $1$--dimensional polyhedron.
Therefore, it has the structure of a graph.

Furthermore, the following properties are known
for Morse functions.
\begin{enumerate}
\item The vertices of a Reeb graph
correspond to the components of level sets that
contain critical points.
\item The restriction of the Reeb map on each
edge is an embedding into $\R$. In particular, no edge is
a loop.
\end{enumerate}

Then, the following natural problems arise.

\begin{problem}
Given a smooth function on a closed
manifold,
does the Reeb space
always have the structure of a graph?
\end{problem}

\begin{problem}
Given a graph, is it realized
as the Reeb space of a certain smooth function?
\end{problem}

In this article, we consider these problems
and give some answers.

\section{Reeb graph theorem}\label{section3}

Let $M$ be a smooth closed manifold of
dimension $m \geq 2$ and $f : M \to \R$
a smooth function.
Then, we have the following.

\begin{thm}\label{thm:Reeb}
If $f$ has at most finitely many critical values, then
the Reeb space 
$W_f$ has the structure
of a graph.
Furthermore, $\bar{f} : W_f \to \R$ is an embedding
on each edge.
\end{thm}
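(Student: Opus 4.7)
The plan is to partition $\R$ using the finite set of critical values and analyze the preimages piecewise. Let $c_1 < c_2 < \cdots < c_k$ denote the critical values of $f$ (necessarily including $\min f$ and $\max f$, since $M$ is closed), and set $I_j = (c_j, c_{j+1})$ for $1 \leq j \leq k-1$; the unbounded outer intervals have empty preimage. On each $I_j$ the function $f$ has no critical values, so $f|_{f^{-1}(I_j)}$ is a proper submersion onto a contractible interval, and Ehresmann's fibration theorem provides a trivialization $f^{-1}(I_j) \cong F_j \times I_j$ with $F_j$ a compact (possibly disconnected) smooth $(m-1)$-manifold. Being compact, $F_j$ has finitely many connected components $F_j^{(1)}, \dots, F_j^{(n_j)}$, and each connected component of $f^{-1}(I_j)$ has the form $F_j^{(\alpha)} \times I_j$. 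Its image in $W_f$ is an open arc mapped homeomorphically to $I_j$ by $\bar f$; these will become the open edges.

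For the critical levels, Lemma~\ref{lem2} supplies the required finiteness: each $f^{-1}(c_i)$ has only finitely many connected components, which will serve as the vertices. The heart of the proof is to show that the edges attach correctly to these vertices, i.e., that for every connected component $C = F_j^{(\alpha)} \times I_j$ of $f^{-1}(I_j)$, the intersection $\bar C \cap f^{-1}(c_{j+1})$ is contained in a single connected component of $f^{-1}(c_{j+1})$ (and analogously for $c_j$). Only then does the closure of the open edge $q_f(C)$ pick out a well-defined pair of endpoint vertices in $W_f$.

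I would establish this single-component claim by contradiction. Suppose $\bar C \cap f^{-1}(c_{j+1}) = A \sqcup B$ with $A$ and $B$ nonempty, disjoint and closed in $M$. Choose disjoint open neighborhoods $U_A \supset A$ and $U_B \supset B$. A short compactness argument shows that for every $t \in I_j$ sufficiently close to $c_{j+1}$, the connected fiber $F_j^{(\alpha)} \times \{t\} \subset C$ lies in $U_A \cup U_B$ and hence, by connectedness, entirely in one of them. A tube-lemma argument in the product $F_j^{(\alpha)} \times I_j$ then shows that the set of such $t$ in the ``$U_A$-state'' and the set in the ``$U_B$-state'' are each open in a left-neighborhood of $c_{j+1}$. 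Since a half-open interval is connected, one of these open sets is empty; but a sequence in $C$ converging to a point of $A$ (resp.\ $B$) forces the $U_A$-state (resp.\ $U_B$-state) to occur arbitrarily close to $c_{j+1}$, contradiction.

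Combining these ingredients equips $W_f$ with the structure of a finite graph: vertices from components of the $f^{-1}(c_i)$, edges from components of the $f^{-1}(I_j)$, and the attaching data just established. The restriction of $\bar f$ to each closed edge is a continuous bijection onto the compact interval $[c_j, c_{j+1}]$, hence a homeomorphism and therefore an embedding into $\R$. I expect the main obstacle to be the state-decomposition step: formalizing the openness of each state and simultaneously ruling out both being nonempty requires a careful blend of the product trivialization away from the critical level with the compactness of $M$.
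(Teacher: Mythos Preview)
Your approach is essentially the same as the paper's. Your ``single-component claim'' is precisely the content of the paper's Lemma~\ref{lem1} (that $\overline{K_i}\setminus K_i$ is connected for each component $K_i$ of the regular slab), and your contradiction argument via connected fibers eventually trapped in $U_A\cup U_B$ is a close variant of the nested-intersection argument given there. The only structural difference is cosmetic: you declare \emph{every} component of each $f^{-1}(c_i)$ to be a vertex, whereas the paper takes as vertices only those components containing critical points; your convention merely introduces some additional degree-$2$ vertices.

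There is, however, one genuine step you skip. Having set up the combinatorial graph $G$ (vertices and edges with attaching data), you assert that ``combining these ingredients equips $W_f$ with the structure of a finite graph'' and then argue about $\bar f$ on closed edges. But at this point you have only a set-theoretic bijection $\rho:W_f\to G$; you have not checked that the \emph{quotient topology} on $W_f$ agrees with the CW topology on $G$. In particular, your appeal to ``continuous bijection from compact to Hausdorff'' for $\bar f$ on a closed edge presupposes that the closure of $q_f(C)$ in $W_f$ is exactly $q_f(C)$ together with the two endpoint vertices, which already requires knowing that $q_f$ is a closed map, i.e.\ that $W_f$ is Hausdorff. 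The paper resolves this cleanly by running the compact-to-Hausdorff argument once at the level of the whole space: it defines the continuous map $\rho:W_f\to G$, checks bijectivity, and concludes $\rho$ is a homeomorphism because $W_f$ is compact (as a quotient of $M$) and $G$ is Hausdorff. You should insert exactly this step; everything else in your outline is sound.
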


\begin{rem}
The above theorem has been known for smooth functions with finitely
many critical \emph{points}. A proof can be found in \cite{Sh}.
\end{rem}

\begin{rem}
(1) If $f$ has infinitely many critical
values, then the above theorem does not hold in general.
Some examples will be presented later.

(2) The same result holds also
for smooth functions $f : M \to \R$
on compact manifolds of dimension $m \geq 2$ with
$\partial M \neq \emptyset$,
provided that both $f$ and $f|_{\partial M}$
have at most finitely many critical values.
This can be proved by the same argument as in the proof
of Theorem~\ref{thm:Reeb} given below.
\end{rem}

\begin{rem}
It is known that if $f : M \to N$ with
$m = \dim{M} > \dim{N} = n \geq 1$
is a smooth map between manifolds with
$M$ being compact, and if $f$
is triangulable, 
then its Reeb space $W_f$
has the structure of an 
$n$--dimensional
finite simplicial complex (or a compact polyhedron)
in such a way that $\bar{f}$ is an embedding
on each simplex \cite{HS}.
In particular, if $f$ is $C^0$--stable,
then the Reeb space $W_f$ is an $n$--dimensional polyhedron.
Therefore, if a smooth function
$f : M \to \R$ on a compact
manifold is triangulable (for example,
if $f$ is a Morse function),
then it follows that $W_f$ is a graph.

As we will see later, 
there exist smooth functions with
finitely many critical values that are
not triangulable.
\end{rem}

For the proof of Theorem~\ref{thm:Reeb}, we need the
following. In the following, for a subset
$S$ of a manifold, $\overline{S}$ denotes the
closure in the ambient manifold.

\begin{lem}\label{lem1}
Let $f : M \to \R$ be a smooth function on a closed
manifold. Suppose $c \in \R$ is a critical value 
such that $[c, c+2\varepsilon)$ does not contain
any critical value other than $c$ for some $\varepsilon > 0$.
Then, the number of connected components of
$$\overline{f^{-1}((c, c+\varepsilon])} \setminus 
f^{-1}((c, c+\varepsilon])$$
does not exceed that of $f^{-1}(c+\varepsilon)$, which
has finitely many connected components.
\end{lem}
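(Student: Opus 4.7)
The plan is to equip $f^{-1}((c, c+\varepsilon])$ with a product structure by means of a gradient-like flow, and then analyse its frontier using the classical topological fact that a nested intersection of connected compact subsets of a Hausdorff space is connected.

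Since $[c,c+2\varepsilon)$ contains no critical value of $f$ other than $c$, the value $c+\varepsilon$ is regular, so $f^{-1}(c+\varepsilon)$ is a closed $(m-1)$--submanifold of the compact manifold $M$ and has finitely many components $C_1,\dots,C_k$. Fix a Riemannian metric on $M$ and set $V:=-\nabla f/|\nabla f|^2$ on $M\setminus\mathrm{Crit}(f)$; since $df(V)=-1$, its flow $\phi_t$ decreases $f$ at unit rate. A standard argument---using that $\{s\le f\le c+\varepsilon\}$ is compact and free of critical points for every $s\in(c,c+\varepsilon)$, so that $|\nabla f|$ is bounded below there---shows that $\phi_t$ is defined on $f^{-1}(c+\varepsilon)$ for all $t\in[0,\varepsilon)$, and that
\[
\Phi:f^{-1}(c+\varepsilon)\times[0,\varepsilon)\longrightarrow f^{-1}((c,c+\varepsilon]),\qquad \Phi(p,t)=\phi_t(p),
\]
is a diffeomorphism. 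Writing $D_i:=\Phi(C_i\times[0,\varepsilon))$, compactness of $C_i$ together with the matching of $f$-values gives $\overline{D_i}\cap D_j=\emptyset$ for $i\ne j$, so the frontier decomposes as $F=\bigcup_{i=1}^{k}F_i$ with $F_i:=\overline{D_i}\setminus D_i\subset f^{-1}(c)$.

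The key step is to show that each $F_i$ is connected. For $t^*\in[0,\varepsilon)$, the set $A_{t^*}:=\overline{\Phi(C_i\times[t^*,\varepsilon))}$ is closed in $M$, hence compact, and connected, being the closure of the continuous image of the connected product $C_i\times[t^*,\varepsilon)$. The family $\{A_{t^*}\}$ is nested decreasing in $t^*$, and comparing $f$-values shows $\bigcap_{t^*\in[0,\varepsilon)}A_{t^*}=F_i$. By the classical nested-intersection theorem, each $F_i$ is therefore connected (and non-empty, since any orbit $t\mapsto\phi_t(p)$ with $t\to\varepsilon$ accumulates in $M$ by compactness). Hence $F$ is a union of $k$ non-empty connected sets, giving $|\pi_0(F)|\le k=|\pi_0(f^{-1}(c+\varepsilon))|$.

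I expect the main technical hurdles to be (i) carefully justifying that $\phi_t$ is defined on $f^{-1}(c+\varepsilon)$ for every $t<\varepsilon$ despite the potential unboundedness of $V$ near critical points at level $c$, and (ii) nailing down the equality $\bigcap_{t^*}A_{t^*}=F_i$ (ruling out extraneous points at $f$-values above $c$), so that the general-topology theorem is applied to precisely the set whose connectedness is required.
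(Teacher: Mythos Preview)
Your proof is correct and follows essentially the same approach as the paper: both arguments exploit the product structure of the regular region to write the frontier as a nested intersection of compact connected sets, then invoke the fact that such an intersection in a Hausdorff space is connected. The only cosmetic differences are that the paper uses the discrete sequence $X_n=\overline{f^{-1}((c,c+\varepsilon/n])}$ rather than your continuously indexed $A_{t^*}$, and that the paper proves the nested-intersection fact inline (via the finite-intersection-property argument with separating open sets $U,V$) rather than citing it.
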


\begin{proof}
Let us first consider the case where $f^{-1}(c+\varepsilon)$
is non-empty and connected. For each $n \geq 1$, as 
$$f^{-1}\left(\left(c, c+ \frac{\varepsilon}{n}\right]\right)
\cong f^{-1}(c+\varepsilon) \times \left(c, c+ \frac{\varepsilon}{n}\right]$$
is connected, its closure, 
$$X_n = \overline{f^{-1}\left(\left(c, 
c+ \frac{\varepsilon}{n}\right]\right)},$$ 
is also connected. Note that
$X_1 \supset X_2 \supset X_3 \supset \cdots$.
We see easily that
$$\bigcap_{n=1}^\infty X_n =
\overline{f^{-1}((c, c + \varepsilon])}
\setminus f^{-1}((c, c+\varepsilon]).$$
Note that this is compact.

Suppose that $\cap_{n=1}^\infty X_n$ 
is not connected. Then, it is decomposed
into the disjoint union $A \cup B$ of non-empty closed sets
$A$ and $B$. As $A$ and $B$ are compact and $M$ is Hausdorff,
there exist disjoint open sets $U$ and $V$ of $M$ such that
$A \subset U$ and $B \subset V$.
Note that then we have $\cap_{n=1}^\infty X_n \subset U \cup V$.

Suppose for all $n$, we have $X_n \not\subset U \cup V$.
Set $\tilde{X} = X_1 \setminus (U \cup V)$, which
is non-empty and compact. As $F_n = X_n \setminus (U \cup V)$, $n \geq 1$,
is a family of closed sets of $\tilde{X}$ which has finite
intersection property, we have
$\cap_{n=1}^\infty F_n \neq \emptyset$,
which contradicts the fact that 
$\cap_{n=1}^\infty X_n \subset U \cup V$.
This shows that we have $X_{n_0} \subset U \cup V$
for some $n_0 \geq 1$.

Then, we have $X_{n_0} \cap U \supset A \neq \emptyset$ and
$X_{n_0} \cap V \supset B \neq \emptyset$, which
contradicts the connectedness of $X_{n_0}$.
Consequently $\cap_{n=1}^\infty X_n$ is connected.

If $f^{-1}(c+\varepsilon)$ is empty, then the
consequence of the lemma trivially holds.
Let us now consider the general case
where $f^{-1}(c+\varepsilon) \neq \emptyset$ may not be connected.
In this case, $f^{-1}((c, c+\varepsilon])$ has finitely
many connected components, say $K_1, K_2, \ldots, K_k$.
By the same argument as above, we can show that
$\overline{K_i} \setminus K_i$ is connected for each $1 \leq i \leq k$.
Therefore, the number of connected components of
$$\overline{f^{-1}((c, c+\varepsilon])} \setminus 
f^{-1}((c, c+\varepsilon]) = \bigcup_{i=1}^k (\overline{K_i}
\setminus K_i)$$
is at most $k$. This completes the proof
of Lemma~\ref{lem1}.
\end{proof}

\begin{rem}\label{rem:lem1}
Note that in Lemma~\ref{lem1}, we can similarly show
that if $(c-2\varepsilon, c]$ does not have
any critical value other than $c$ for some $\varepsilon > 0$, then
the number of connected components of
$$\overline{f^{-1}([c-\varepsilon, c))} \setminus 
f^{-1}([c-\varepsilon, c))$$
does not exceed that of $f^{-1}(c-\varepsilon)$, which
has finitely many connected components.
\end{rem}

\begin{rem}
In Lemma~\ref{rem:lem1}, the number of connected
components of $$\overline{f^{-1}((c, c+\varepsilon])} \setminus 
f^{-1}((c, c+\varepsilon])$$
can be strictly smaller than that of $f^{-1}(c+\varepsilon)$.
For an example, see Figure~\ref{fig12}.
\end{rem}

\begin{figure}[h]
\centering
\psfrag{f}{$f$}
\psfrag{R}{$\R$}
\psfrag{M}{$M$}
\psfrag{c}{$c$}
\psfrag{cc}{$c + \varepsilon$}
\includegraphics[width=0.9\linewidth,height=0.4\textheight,
keepaspectratio]{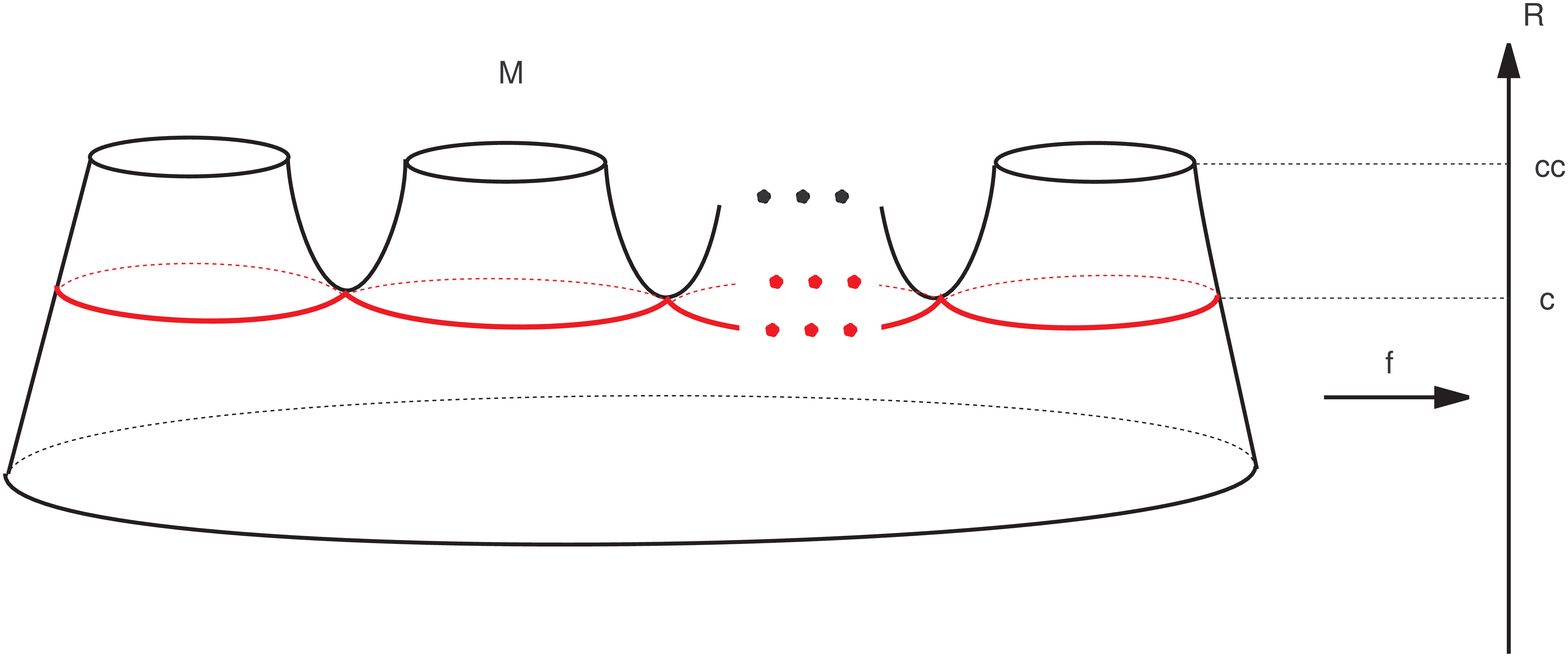}
\caption{Number of components of $f^{-1}(c)$ can
be strictly smaller than that of $f^{-1}(c + \varepsilon)$.}
\label{fig12}
\end{figure}

\begin{lem}\label{lem2}
Let $f : M \to \R$ be a smooth function on a closed
manifold. Suppose $c \in \R$ is an isolated critical value 
of $f$. Then, $f^{-1}(c)$ has at most finitely
many connected components.
\end{lem}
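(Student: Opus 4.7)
The plan is to combine Lemma~\ref{lem1} with Remark~\ref{rem:lem1} and a short point-set separation argument. Since $c$ is an isolated critical value, I can choose $\varepsilon>0$ so small that $(c-2\varepsilon,c+2\varepsilon)$ contains no critical value of $f$ other than $c$. Set
\[
B^{+}=\overline{f^{-1}((c,c+\varepsilon])}\setminus f^{-1}((c,c+\varepsilon]),\qquad
B^{-}=\overline{f^{-1}([c-\varepsilon,c))}\setminus f^{-1}([c-\varepsilon,c)).
\]
Both are contained in $f^{-1}(c)$, and by Lemma~\ref{lem1} together with Remark~\ref{rem:lem1} each has only finitely many connected components; hence $B^{+}\cup B^{-}\subset f^{-1}(c)$ has finitely many connected components as well.

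The key point-set observation is that a point $x\in f^{-1}(c)$ fails to lie in $B^{+}\cup B^{-}$ exactly when $x$ admits an open neighborhood $U$ in $M$ with $U\subset f^{-1}(c)$. Indeed, if $x\notin B^{+}\cup B^{-}$ then $x$ has an $M$-neighborhood $U_{0}\subset f^{-1}((c-\varepsilon,c+\varepsilon))$ that is disjoint from both $f^{-1}((c,c+\varepsilon])$ and $f^{-1}([c-\varepsilon,c))$, forcing $U_{0}\subset f^{-1}(c)$; conversely, any neighborhood of a point of $B^{\pm}$ meets $f^{-1}((c,c+\varepsilon])$ or $f^{-1}([c-\varepsilon,c))$.

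I then partition the connected components of $f^{-1}(c)$ into two classes. A component $C$ that \emph{meets} $B^{+}\cup B^{-}$ must contain at least one whole connected component of $B^{+}\cup B^{-}$ (because a component of $B^{+}\cup B^{-}$ is connected and lives in $f^{-1}(c)$), and distinct components of $f^{-1}(c)$ are disjoint; so the number of such $C$ is bounded by $|\pi_{0}(B^{+}\cup B^{-})|<\infty$. A component $C$ \emph{disjoint} from $B^{+}\cup B^{-}$ consists entirely of points of the second type from the observation, so each $x\in C$ has a connected open $M$-neighborhood contained in $f^{-1}(c)$, which must lie inside the component $C$; therefore $C$ is open in $M$. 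Being also closed in $M$ (as a component of the closed set $f^{-1}(c)$), $C$ is clopen in $M$, so $C$ coincides with a connected component of $M$. Since $M$ is compact it has only finitely many components, so there are only finitely many such $C$. Adding the two finite bounds yields the lemma.

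The step I expect to be the most delicate is the point-set observation identifying the complement of $B^{+}\cup B^{-}$ in $f^{-1}(c)$ with the $M$-interior of $f^{-1}(c)$; once that is established, the remaining argument is a straightforward clopen/counting dichotomy. One should also be mindful of edge cases in which $f^{-1}(c\pm\varepsilon)$ is empty, but then the corresponding $B^{\pm}$ is empty and the argument only simplifies.
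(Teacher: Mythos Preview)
Your argument is correct and follows essentially the same route as the paper's proof: both use Lemma~\ref{lem1} and Remark~\ref{rem:lem1} to bound the number of components of $f^{-1}(c)$ that meet $B^{+}\cup B^{-}$, and then observe that any remaining component is open (hence clopen) in $M$ and therefore a whole connected component of the compact manifold $M$. Your explicit identification of $f^{-1}(c)\setminus(B^{+}\cup B^{-})$ with the $M$-interior of $f^{-1}(c)$ is a slightly cleaner packaging of the same point-set step the paper carries out directly.
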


\begin{proof}
Let $A_\lambda$, $\lambda \in \Lambda$, be
the connected components of $f^{-1}(c)$.
Note that $A_\lambda$ are closed in $M$.
Then, for a sufficiently small $\varepsilon > 0$,
the cardinality of $\lambda$
such that $A_\lambda$ intersects
$\overline{f^{-1}([c - \varepsilon,
c))} \cup \overline{f^{-1}((c, c+\varepsilon])}$
is at most finite by virtue of Lemma~\ref{lem1}
and Remark~\ref{rem:lem1}.

On the other hand, suppose $A_\lambda$
does not intersect $\overline{f^{-1}([c - \varepsilon,
c))} \cup \overline{f^{-1}((c, c+\varepsilon])}$
for some $\lambda \in \Lambda$.
Then, we have
$$A_\lambda \subset
f^{-1}(c) \setminus (\overline{f^{-1}([c - \varepsilon,
c))} \cup \overline{f^{-1}((c, c+\varepsilon])}).$$
For every $x \in A_\lambda$, there
exists a small open disk neighborhood $U_x$ of $M$
such that $U_x \cap (f^{-1}([c - \varepsilon,
c)) \cup f^{-1}((c, c+\varepsilon])) = \emptyset$.
This implies that $U$ is completely contained in $f^{-1}(c)$
by the continuity of $f$. Then, we have $U_x \subset A_\lambda$.
This shows that $A_\lambda$
is an open codimension zero submanifold of $M$. As
it is also closed and connected, $A_\lambda$
is a component of $M$. Thus, the cardinality of
such $\lambda$'s is at most finite.

This completes the proof of Lemma~\ref{lem2}.
\end{proof}

\begin{proof}[Proof of Theorem~\textup{\ref{thm:Reeb}}]
Let $C_f \subset \R$ be the set of critical values
of $f$, which is finite by our assumption.
Let $G_f$ be the graph constructed as follows.
The vertices correspond bijectively to the connected components
of $f^{-1}(C_f)$ that contain critical points. By our assumption
and Lemma~\ref{lem2}, the number of vertices is finite.
Let $L_f \subset f^{-1}(C_f)$ denote the union of such
connected components. 
Then, the edges correspond bijectively to the connected components
of $M \setminus L_f$. Note that each such connected component
is diffeomorphic to the product of a closed connected
$(m-1)$--dimensional manifold and an open interval.
For each vertex $v$ (or edge $e$), let us denote by
$V_v$ (resp.\ $E_e$) the component of
$L_f$ (resp.\ $M \setminus L_f$) corresponding to $v$
(resp.\ $e$). 
Then, an edge $e$ is incident to a vertex $v$ if
and only if the closure of $E_e$
intersects $V_v$. More precisely, each edge $e$
is oriented, and its initial vertex (or the
terminal vertex) is given by $v$
if and only if $x > f(V_v)$ (resp.\ $x < f(V_v)$)
for all $x \in f(E_e)$
and the closure of $E_e$ intersects $V_v$.
Note that by the proof of Lemma~\ref{lem1}, this is
well-defined and we get a finite graph $G_f$.

In the following, the terminology ``edge''
often refers to the corresponding open $1$--cell
of the graph.
For each edge $e$, 
$f(E_e)$ is an open interval, say $(a_e, b_e)$ for $a_e < b_e$.
Then, we have a canonical orientation preserving
embedding $h_e : e \to (a_e, b_e) \subset
\R$. These functions for all $e$ can naturally be
extended to a continuous function $h : G_f \to \R$,
where $h(v) = f(V_v)$ for each vertex $v$.
(The reader is referred to the commutative
diagram in Figure~\ref{fig6} for various spaces and maps
defined here and in the following.) 

\begin{figure}[h]
\centering
\psfrag{f}{$f$}
\psfrag{M}{$M$}
\psfrag{R}{$\R$}
\psfrag{rho}{$\rho$}
\psfrag{Rf}{$W_f$}
\psfrag{Gf}{$G_f$}
\psfrag{q}{$q_f$}
\psfrag{Q}{$Q_f$}
\psfrag{h}{$h$}
\psfrag{bf}{$\bar{f}$}
\includegraphics[width=0.9\linewidth,height=0.3\textheight,
keepaspectratio]{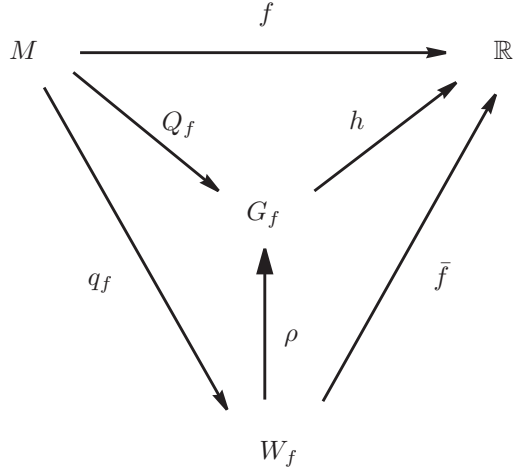}
\caption{A commutative diagram}
\label{fig6}
\end{figure}

Let $p_e : E_e \to e$ be the composition
of $f|_{E_e} : E_e \to (a_e, b_e)$ and
$h_e^{-1}$. Then, these maps $p_e$
for all the edges $e$ can naturally be extended
to a map $Q_f : M \to G_f$ in such a way that
$Q_f(V_v) = v$ for all vertices $v$ of $G_f$.
By our definition of $G_f$,
this is well defined and continuous.

Let us define the map $\rho : W_f \to G_f$ as follows.
For a point $y \in W_f$, $q_f^{-1}(y)$ is contained
in a unique $E_e$ or $V_v$. Then, $\rho(y)$
is defined to be the point in $e$ corresponding to $p_e(q_f^{-1}(y))$
in the former case, and is defined to be $v$
in the latter case. Since $Q_f = \rho \circ q_f$
is continuous, we see that $\rho$ is continuous.
Furthermore, we see easily that $\rho$
is bijective. Since $W_f$ is compact and $G_f$
is Hausdorff, we conclude that $\rho$
is a homeomorphism. Thus, $W_f$
has the structure of a graph.

Finally, we see easily that
$\bar{f} = h \circ \rho : W_f \to \R$.
As $h$ is an embedding on each edge, so
is $\bar{f}$. This completes the proof.
\end{proof}

\begin{rem}
By the above proof, for the graph structure of $W_f$,
the set of vertices of $W_f$ corresponds bijectively
to the set of connected components of level sets
containing critical points.
\end{rem}

Let us give an example of a smooth
function with \emph{infinitely many critical
values} for which the consequence of Theorem~\ref{thm:Reeb}
does not hold.

\begin{ex}
Let $M$ be an arbitrary smooth closed manifold
of dimension $m \geq 2$.
Then, by \cite{Wh1934}, there always
exists a smooth function $f : M \to [0, \infty)$
such that $f^{-1}(0)$ is a Cantor set
embedded in $M$.
In particular, $f^{-1}(0)$ has uncountably many connected
components.
Thus, the consequence of Theorem~\ref{thm:Reeb} does not hold for
such an $f$.
In this example, we can show that $f$ has
infinitely many critical values.
%

Let us give a more explicit example.
Let $\varphi_1 : \R^2 \to [0, 1]$ be a smooth
function as follows (see Figure~\ref{fig7}). 
\begin{enumerate}
\item The level set $\varphi_1^{-1}(0)$ coincides with the complement
of the open unit disk.
\item The level set $\varphi_1^{-1}(1)$ coincides with the disjoint
union of two disks centered at $a_1 = (-1/2, 0)$ and
$a_2 = (1/2, 0)$ with radius $(1/4) - \varepsilon$ 
for a sufficiently small $\varepsilon > 0$.
\item On $\varphi_1^{-1}((0, 1))$, it has a unique
critical point at the origin, which is non-degenerate of
index $1$ and whose $\varphi_1$--value is equal to $1/2$.
\item The level set $\varphi_1^{-1}(t)$
is homeomorphic to a circle, which is connected, 
for all $t \in (0, 1/2)$, and is homeomorphic to
the union of two circles for all $t \in (1/2, 1)$.
In particular, for $t$ sufficiently close to $0$ or $1$,
the components of the level sets are circles
whose centers are the origin and the points $a_1$ and $a_2$,
respectively.
\end{enumerate}

\begin{figure}[h]
\centering
\psfrag{1}{$1$}
\psfrag{0}{$0$}
\psfrag{h}{$\varphi_1 = \frac{1}{2}$}
\psfrag{f1}{$\varphi_1 = 1$}
\psfrag{f0}{$\varphi_1 = 0$}
\psfrag{a1}{$a_1$}
\psfrag{a2}{$a_2$}
\includegraphics[width=\linewidth,height=0.4\textheight,
keepaspectratio]{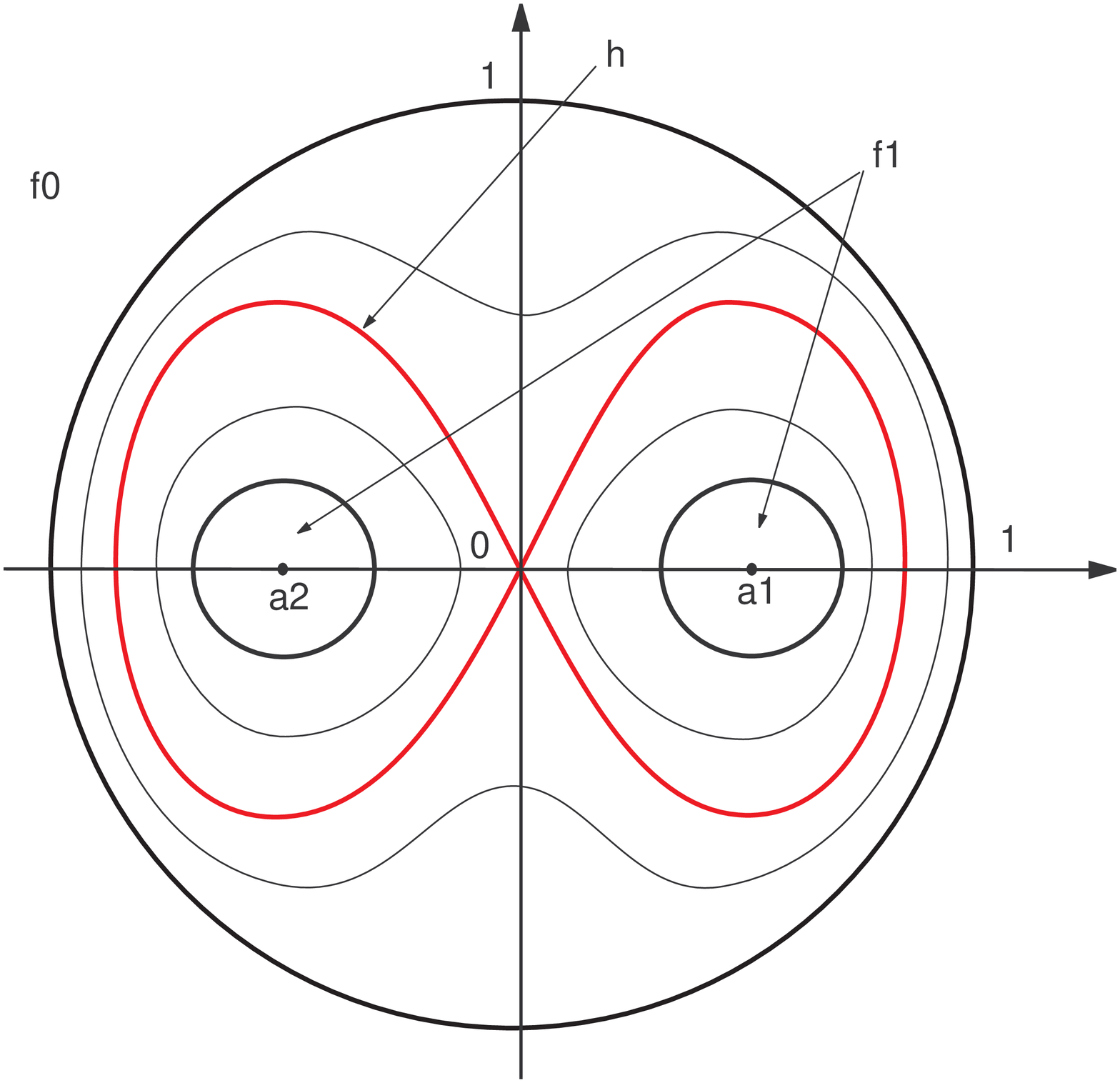}
\caption{Level sets of the smooth function $\varphi_1$}
\label{fig7}
\end{figure}

Then, we define the smooth function $\varphi_2 : \R^2 \to \R$ by
$$\varphi_2(x) = \varphi_1(4(x-a_1)) +
\varphi_1(4(x-a_2)), \, x \in \R^2.$$
Note that $\varphi_2^{-1}(1)$ consists of disjoint
four disks of radius slightly smaller than $1/16$.
Let $b_1, b_2, b_3$ and $b_4$ be the centers of the disks.
Then, we define $\varphi_3 : \R^2 \to \R$ by
$$\varphi_3(x) = \sum_{i=1}^4 \varphi_1(16(x-b_i)), 
\, x \in \R^2.$$

Repeating this procedure inductively, we can construct
a sequence of smooth functions $\varphi_n$, $n \geq 1$. Then 
consider the series
$\psi = \sum_{n=1}^\infty c_n\varphi_n$ 
for a rapidly decreasing sequence $\{c_n\}_{n=1}^\infty$ 
of positive real numbers. 
We can show that this series converges uniformly
and that $\psi$ defines a smooth function on $\R^2$.
Furthermore, we see that it has the following
properties.
\begin{enumerate}
\item The level set $\psi^{-1}(0)$ coincides with the complement
of the open unit disk.
\item The level set $\psi^{-1}(c)$ is a Cantor set, where
$c = \sum_{n=1}^\infty c_n$.
\item The critical value set of $\psi$ consists
of countably many real numbers $\{r_k\}_{k=0}^\infty$ with
$0 = r_0 < r_1 < r_2< \cdots$ converging to $c$ together with $r_\infty = c$
itself, where $r_k = \sum_{n=1}^k c_n$ for $k \geq 1$.
\item For $t \in (r_k, r_{k+1})$, the level set
$\psi^{-1}(t)$ is homeomorphic to the disjoint union
of $2^k$ circles for each $k \geq 0$.  
\end{enumerate}
Then, we see that the Reeb space of $\psi$ is as depicted
in Figure~\ref{fig8}. It consists of countably many ``edges''
and uncountably many ``vertices''. However,
this is not a cell complex, as every point of 
$\bar{\psi}^{-1}(c)$ is an
accumulation point. (In fact, one can show that
the Reeb space $W_\psi$ can be embedded in $\R^2$
as in Figure~\ref{fig8}.)

\begin{figure}[h]
\centering
\psfrag{0}{$r_0 = 0$}
\psfrag{b}{$\bar{\psi}$}
\psfrag{r1}{$r_1$}
\psfrag{r2}{$r_2$}
\psfrag{r3}{$r_3$}
\psfrag{W}{$W_\psi$}
\psfrag{R}{$\R$}
\psfrag{C}{Cantor set}
\psfrag{3}{$c$}
\includegraphics[width=0.9\linewidth,height=0.3\textheight,
keepaspectratio]{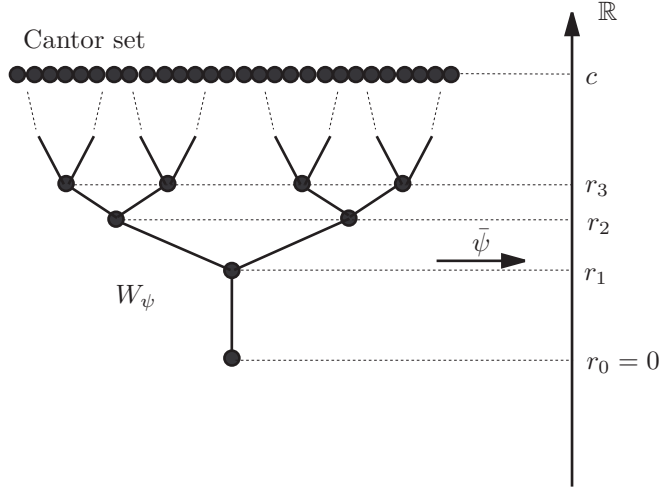}
\caption{Reeb space of $\psi$}
\label{fig8}
\end{figure}

Note that, for an arbitrary closed connected surface $M$,
by embedding $\R^2$ into $M$ and by extending the function
$\psi$ by the zero function on the complement, we can
construct a smooth function $f : M \to \R$ whose
Reeb space has the same properties.
\end{ex}

Let us give another example of a smooth
function whose Reeb space does not have
the structure of a graph.

\begin{ex}
Let us consider the smooth function $f : \R^2 \to \R$
as follows. For $n \geq 1$, let $D_n$ be the closed disk
in $\R^2$ centered at the point $(1/n, 0)$ with radius
$1/2n(n+1)$. Note that the disks $\{D_n\}_{n=1}^\infty$ 
are disjoint.
Let $g : D^2 \to [0, 1]$ be a smooth function on the unit disk
in $\R^2$ with the following properties.
\begin{enumerate}
\item The function $g$ restricted to a small 
collar neighborhood $C(\partial D^2)$ 
of $\partial D^2$ is constantly zero.
\item The restriction $g|_{D^2 \setminus C(\partial D^2)}$ 
has the unique critical point at the origin,
which is the maximum point and takes the value $1$.
\item Each level set $g^{-1}(t)$ is a circle centered
at the origin for $t \in (0, 1)$.
\end{enumerate}
Note that the Reeb space of $g$ can be identified with $[0, 1]$.
Let $f_n$ be the smooth function on $D_n$ 
defined by
$$f_n(x) = g\left(2n(n+1)\left(x-\frac{1}{n}\right)\right).$$
Then, we define $f : \R^2 \to \R$ by
$f(x) = c_nf_n(x)$ if $x \in D_n$ and $f(x) = 0$ otherwise
for a rapidly decreasing sequence $\{c_n\}_{n=1}^\infty$
of positive real numbers.

We can show that $f$ is a smooth function.
Furthermore, we can also show that 
the Reeb space $W_f$ of $f$ is homeomorphic to
the union of line segments $I_n$ in $\R^2$, $n \geq 1$, where
$$I_n = \left\{t\left(\cos \frac{\pi}{n}, \sin \frac{\pi}{n}\right) 
\in \R^2\,\left|\, 0 \leq t \leq c_n\right.\right\}.$$
So, $W_f$ is a union of infinitely many intervals where
exactly one end point of each interval is glued to a fixed point, say $p$
(see Figure~\ref{fig9}).
In this sense, it seems to have the structure of a cell complex.
However, its possible vertex set $V$ does not have the discrete
topology. In fact, $p$ is in the closure of $V \setminus \{p\}$.
Therefore, $W_f$ does not have the topology of a graph.

\begin{figure}[h]
\centering
\psfrag{c1}{$c_1$}
\psfrag{c2}{$c_2$}
\psfrag{0}{$0$}
\psfrag{p}{$p$}
\psfrag{b}{$\bar{f}$}
\psfrag{W}{$W_f$}
\psfrag{R}{$\R$}
\includegraphics[width=0.9\linewidth,height=0.3\textheight,
keepaspectratio]{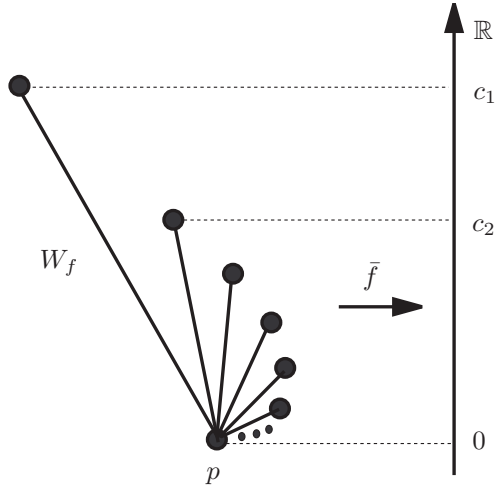}
\caption{Reeb space of $f$}
\label{fig9}
\end{figure}

Note that, in this example, the critical value set
is equal to the infinite set 
$$\{c_n \,|\, n = 1, 2, 3, \ldots\} \cup \{0\}.$$

Note also that by embedding $\R^2$ into an arbitrary
closed surface $M$ and by extending the function
by the zero map in the complement, one can construct
a smooth function $M \to \R$ with
Reeb space having the same property.
Note also that for this example, an arbitrary level
set has at most finitely many connected components;
however, the number of connected components is not
uniformly bounded from above.
\end{ex}

Let us give an example of a non-triangulable
smooth function with finitely many critical values.

\begin{ex}
Given an arbitrary smooth closed manifold
$M$ of dimension $m \geq 2$, there always exists a smooth function
$f : M \to \R$ with finitely many
critical values that is not triangulable.
Nevertheless, even in such a situation, 
$W_f$ is a graph.

In fact, we can construct such a function
so that for a critical value $c$, the
closed set $f^{-1}(c)$
cannot be triangulated as follows.
First, take any smooth function $g : M \to \R$
with finitely many critical values
and a regular value $c$. Then, modify $M$
along the submanifold $L = g^{-1}(c)$ as follows.
Since the set of critical values is closed in $\R$,
the closed interval $I = [c-\varepsilon, c+\varepsilon]$
contains no critical values for some $\varepsilon > 0$.
Note that then $g^{-1}(I) \cong L \times I$. In the
following, we fix such a diffeomorphism and identify
$g^{-1}(I)$ with $L \times I$.

On the other hand, we consider smooth functions
$h_i : L \to I$, $i=1, 2$,
such that
\begin{enumerate}
\item $h_1(x) \leq h_2(x)$ for all $x \in L$, and
\item the set
$$\tilde{L} = \{(x, t) \in L \times I \,|\,
x \in L, h_1(x) \leq t \leq h_2(x)\}$$
is not triangulable.
\end{enumerate}
(For example, construct such functions in such a way
that the interior of $\tilde{L}$
in the compact set $L \times I$
has infinitely many connected components.)

Then, we consider the compact manifold
$M \setminus g^{-1}((c-\varepsilon, c+\varepsilon))$
and glue $\tilde{L}$ along the boundary in such a way that 
$(x, h_1(x))$ (resp.\ $(x, h_2(x))$) in $\tilde{L}$
is identified with $(x, c-\varepsilon)$ (resp.\ $(x, c+\varepsilon)$)
in $g^{-1}(c - \varepsilon) \cong L \times \{c-\varepsilon\}$
(resp.\ $g^{-1}(c + \varepsilon) \cong L \times \{c+\varepsilon\}$).
Then, the resulting space $\tilde{M}$ is easily seen
to be a smooth manifold naturally diffeomorphic to $M$.
Furthermore, we can modify $g$ slightly on a neighborhood
of $g^{-1}(I)$ by using bump functions
to get a smooth function $f : \tilde{M} \to \R$
such that $f^{-1}(c) = \tilde{L}$ and $c$ is the unique
critical value of $f$ in $I$.
Then, the smooth function $f$ has the desired
properties.
See Figure~\ref{fig2}.
\end{ex}

\begin{figure}[h]
\centering
\psfrag{f}{$f$}
\psfrag{M}{$\tilde{M}$}
\psfrag{r}{$c$}
\psfrag{S}{$\tilde{L}$}
\includegraphics[width=\linewidth,height=0.4\textheight,
keepaspectratio]{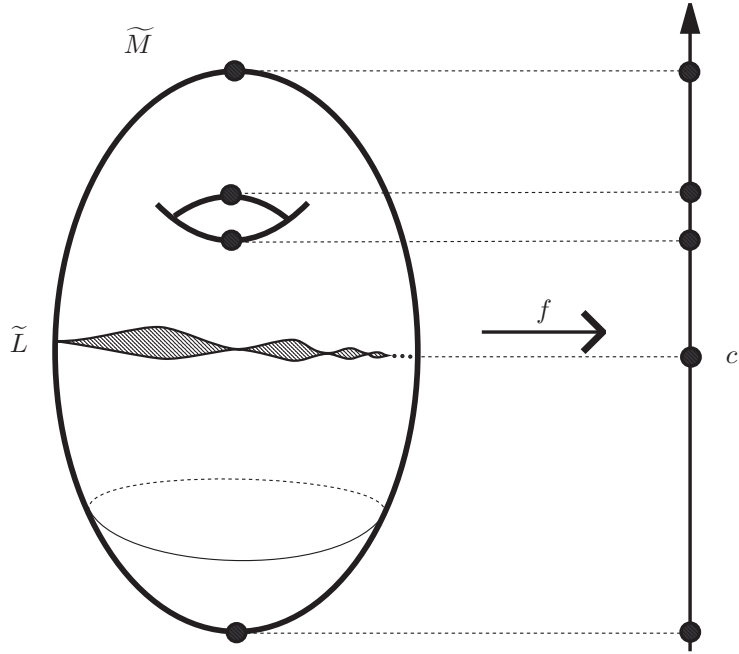}
\caption{Non-triangulable function with finitely many critical values}
\label{fig2}
\end{figure}

\section{Path Reeb spaces}\label{section3.5}

In the definition of a Reeb space, we usually
use connected components of level sets.
If we use \emph{path-components} instead, then
the resulting space is called the \emph{path Reeb space}.

\begin{dfn}
(1) Let $X$ be a topological space.
A continuous map $\lambda : [0, 1] \to X$ is called a
\emph{path} connecting $\lambda(0)$ and $\lambda(1) \in X$.
We say that two points $x_0, x_1 \in X$ are
\emph{connected by a path} 
if there exists a path connecting $x_0$ and $x_1$.
This defines an equivalence relation on $X$ and each equivalence
class is called a \emph{path-component} of $X$.
A topological space $X$ is said to be \emph{path-connected}
if it consists of a unique path-component.

(2) Let $f : X \to Y$ be a continuous map between topological spaces.
For two points $x_0, x_1 \in X$, we define $x_0 \sim_\mathrm{p} x_1$
if $f(x_0) = f(x_1)$ and $x_0, x_1$ lie on the same path-component
of $f^{-1}(f(x_0)) = f^{-1}(f(x_1))$.
Let $W^\mathrm{p}_f = X/\!\sim_\mathrm{p}$ be the quotient space 
with respect to this equivalence relation: i.e.\ $W^\mathrm{p}_f$
is a topological space endowed with the quotient topology.
Let $q^\mathrm{p}_f : X \to W^\mathrm{p}_f$ denote the quotient map.
Then, there exists a unique map $\bar{f}^\mathrm{p} : W^\mathrm{p}_f \to Y$ that 
is continuous and makes the 
following diagram commutative:
\begin{eqnarray*}
X \!\!\!\! & \spmapright{f} & \!\!\!\! Y \\
& {}_{q^\mathrm{p}_f}\!\!\searrow \quad \qquad \nearrow_{\bar{f}^\mathrm{p}} & \\
& \,W^\mathrm{p}_f. &
\end{eqnarray*}
The space $W^\mathrm{p}_f$ is called the
\emph{path Reeb space} of $f$, and
the map $\bar{f}^\mathrm{p} : W^\mathrm{p}_f \to Y$ is called
the \emph{path Reeb map} of $f$. The decomposition
of $f$ as $\bar{f}^\mathrm{p} \circ q^\mathrm{p}_f$ as in the above
commutative diagram is called the \emph{path Stein
factorization} of $f$. 
\end{dfn}

The path Reeb space and the usual Reeb space are not homeomorphic
to each other in general. To see this, let us first observe the following.

\begin{lem}
Let $f : X \to Y$ be a continuous map
between topological spaces.
If $Y$ is a $T_1$--space, then so is the
Reeb space $W_f$.
\end{lem}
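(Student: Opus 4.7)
The plan is to verify the $T_1$ axiom pointwise: I would show that each singleton $\{w\} \subset W_f$ is closed. Since $W_f$ carries the quotient topology induced by $q_f$, this is equivalent to showing that the preimage $q_f^{-1}(w) \subset X$ is closed in $X$. So the whole argument reduces to identifying what $q_f^{-1}(w)$ is and proving it is closed.

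By the definition of the equivalence relation $\sim$, the fiber $q_f^{-1}(w)$ is precisely one connected component $C$ of the level set $f^{-1}(y)$, where $y := \bar{f}(w) \in Y$. The commutativity of the Stein factorization diagram is what guarantees that $q_f^{-1}(w) \subset f^{-1}(\bar{f}(w))$, and the definition of $\sim$ pins down which component it is. So I now need to show that this component $C$ is closed in $X$.

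Here I would use two standard facts. First, since $Y$ is $T_1$, the singleton $\{y\}$ is closed in $Y$, and by continuity of $f$ the preimage $f^{-1}(y)$ is closed in $X$. Second, in any topological space, connected components are always closed in the ambient space (they coincide with their own closures, since the closure of a connected set is connected). Applying this to the subspace $f^{-1}(y)$, the component $C = q_f^{-1}(w)$ is closed in $f^{-1}(y)$. Combining with the fact that $f^{-1}(y)$ is closed in $X$, transitivity of ``closed in'' yields that $C$ is closed in $X$, and therefore $\{w\}$ is closed in $W_f$.

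I do not expect a serious obstacle: the only subtle point is remembering that connected components (as opposed to \emph{path}-components) are automatically closed, which is exactly why the statement is formulated for the Reeb space rather than the path Reeb space. This also fits the narrative of the paper, since the next remark presumably contrasts this with $W_f^{\mathrm{p}}$, whose path-components need not be closed, explaining why the $T_1$ property can fail there.
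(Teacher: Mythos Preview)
Your proof is correct and follows essentially the same approach as the paper: identify $q_f^{-1}(w)$ as a connected component of the closed level set $f^{-1}(\bar{f}(w))$, use that connected components are closed, and conclude via the quotient topology. Your explicit mention of the transitivity of closedness and the contrast with path-components is slightly more detailed than the paper's version, but the argument is the same.
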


\begin{proof}
Take a point $c \in W_f$. Then, $q_f^{-1}(c)$
is a connected component of a level set
$f^{-1}(\bar{f}(c))$. Since $Y$ is a $T_1$--space
and $f$ is continuous, $f^{-1}(\bar{f}(c))$ is closed
in $X$. Therefore, each of its connected components
is closed in $X$, and in particular, $q_f^{-1}(c)$
is closed in $X$. Hence, $\{c\}$ is closed in $W_f$.
\end{proof}

For example, for an arbitrary
smooth closed manifold $M$ of dimension $m \geq 2$,
we can easily
construct a compact set $K$ in $M$ which is connected,
but is not path-connected. There exists a smooth
function $f : M \to [0, 1]$ such that $f^{-1}(0) = K$.
Then, the Reeb space $W_f$ and the path Reeb space
$W^\mathrm{p}_f$ are not the same.
Furthermore, we can find a compact connected set $K$
as above containing a path-component
that is not compact. Then, the path Reeb space
of the resulting smooth
function $f$ is not a $T_1$--space,
since the point in the path Reeb space $W^\mathrm{p}_f$
corresponding to
that path-component is not closed.
So, the Reeb space $W_f$ and the path Reeb space $W^\mathrm{p}_f$ are
not homeomorphic to each other.


Let us give an example of a smooth function
on a closed manifold with finitely many critical values
whose path Reeb space is not a $T_1$--space.

\begin{ex}
Let $\Delta_1$ and $\Delta_2$ be closed $2$--disks
disjointly embedded in the $2$--sphere $S^2$.
Let $C$ be a curve homeomorphic to the real line
$\R$ embedded in the annular region $S^2 \setminus (\Delta_1 \cup \Delta_2)$
whose ends wind around $\partial \Delta_1$ and $\partial \Delta_2$
as in Figure~\ref{fig10}. Note that $C$ is not closed in $S^2$
and the closure $\overline{C}$ coincides with $C \cup \partial \Delta_1
\cup \partial \Delta_2$. Note also that $K = C \cup \Delta_1 
\cup \Delta_2$ is a
compact connected set which is not path-connected.

\begin{figure}[h]
\centering
\psfrag{S}{$S^2$}
\psfrag{d1}{$\Delta_1$}
\psfrag{d2}{$\Delta_2$}
\psfrag{c}{$C$}
\includegraphics[width=\linewidth,height=0.5\textheight,
keepaspectratio]{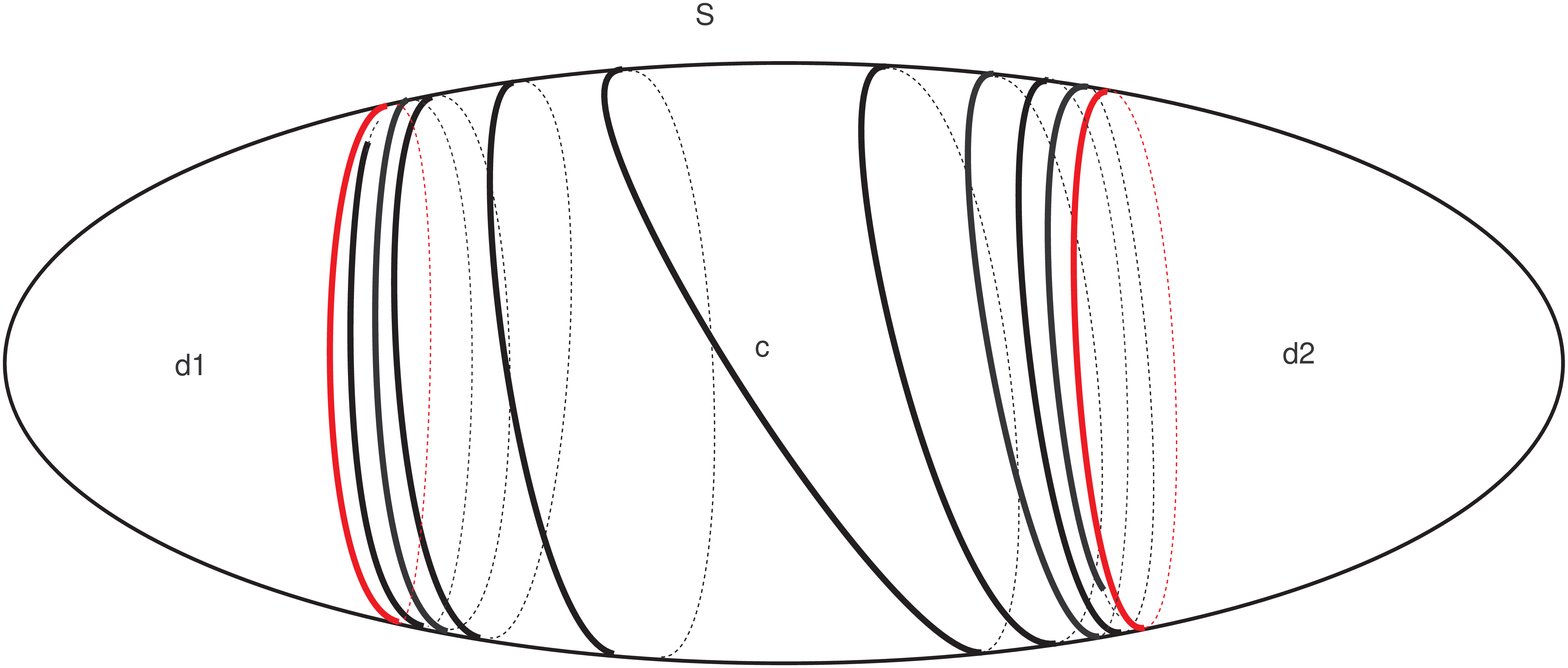}
\caption{Zero level set of $f : S^2 \to \R$}
\label{fig10}
\end{figure}

We see easily that $S^2 \setminus K$ is diffeomorphic
to an open $2$--disk. Let us fix a diffeomorphism
$\varphi : \Int{D^2} \to S^2 \setminus K$, where
$\Int{D^2}$ is the open unit disk in $\R^2$.
For $n \geq 1$, let $g_n: \Int{D^2} \to [0, 1]$ be a smooth
function with the following properties.
\begin{enumerate}
\item The zero level set $g_n^{-1}(0)$ coincides with
the complement of the open disk $D_n$
of radius $1-(n+1)^{-1}$ centered at the origin.
\item $g_n|_{D_n \setminus \{0\}}$ is a smooth
function of $r = \sqrt{x^2+y^2} \in (0, 1-(n+1)^{-1})$, say
$g_n(x, y) = h_n(r)$, with $h_n'(r) < 0$.
\item $g_n|_{D_n}$ has a unique critical point at the
origin, which is the maximum point of $g_n$, with $g_n(0) = 1$.
\end{enumerate}
Let $f_n : S^2 \to \R$ be the function defined by
$f_n(z) = g_n \circ \varphi^{-1}(z)$ for $z \in S^2 \setminus K$,
and $f_n(z) = 0$ otherwise. We see easily that $f_n$ defines
a smooth function. Then, we set
$f = \sum_{n=1}^\infty c_n f_n$ for a rapidly decreasing
sequence $\{c_n\}_{n=1}^\infty$ of positive real numbers,
so that $f$ defines a smooth function $f : S^2 \to \R$.

By construction, we see that $f^{-1}(0) = K$. Since $f$
takes the minimum value $0$ on $K$, all the points
in $K$ are critical points. Furthermore,
$f$ has a unique critical point, $\varphi(0)$, on $S^2 \setminus K$,
which is the maximum point of $f$.
Hence, the critical value set consists of two values: $0$ and
$f(\varphi(0)) = \sum_{n=1}^\infty c_n$.

The Reeb space $W_f$ is easily seen to be homeomorphic to
a closed interval, which has the structure of a graph
(see Figure~\ref{fig11}). On the other hand, the
path Reeb space $W^\mathrm{p}_f$ is not a $T_1$--space.
More precisely, it consists of a half open interval together with three points
$v_1, v_2$ and $v_C$ that correspond to $\Delta_1$, $\Delta_2$
and $C$, respectively: $\{v_1\}$ and $\{v_2\}$ are closed, while
$\{v_C\}$ is not closed and its closure coincides with $\{v_C, v_1, v_2\}$.
Furthermore, the closure of the half open interval contains these three
points (see Figure~\ref{fig11}).

\begin{figure}[h]
\centering
\psfrag{W1}{$W_f$}
\psfrag{W2}{$W^\mathrm{p}_f$}
\psfrag{p1}{$v_1$}
\psfrag{p2}{$v_C$}
\psfrag{p3}{$v_2$}
\includegraphics[width=0.9\linewidth,height=0.25\textheight,
keepaspectratio]{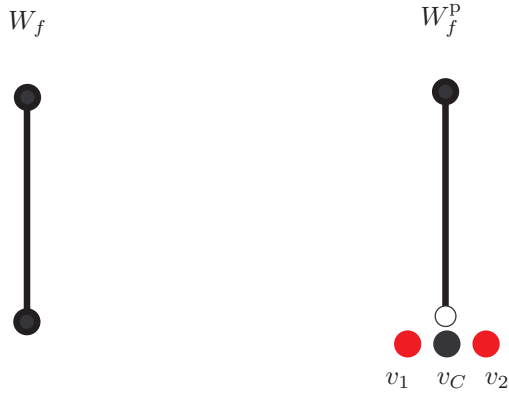}
\caption{The Reeb space and the path Reeb space of $f$}
\label{fig11}
\end{figure}

This example shows that our Theorem~\ref{thm:Reeb}
does not hold in general for path Reeb spaces.
In fact, Lemma~\ref{lem1} does not hold for
the numbers of path-components for this
example.
\end{ex}

However, for smooth functions with finitely many \emph{critical
points}, we have the following.

\begin{prop}
Let $f : M \to \R$ be a smooth function
on a closed manifold with finitely many critical points.
Then, we have $W_f = W^\mathrm{p}_f$.
\end{prop}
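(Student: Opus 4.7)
The plan is to show that for every $c\in\R$ every connected component of $f^{-1}(c)$ is path-connected; this will immediately imply that the equivalence relations $\sim$ and $\sim_\mathrm{p}$ coincide, and hence $W_f=W_f^\mathrm{p}$. For a regular value $c$, $f^{-1}(c)$ is a smooth closed $(m-1)$-manifold and its components are automatically path-connected, so the real work is confined to the (finitely many) critical values of $f$.

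Fix a critical value $c$ and a component $A$ of $f^{-1}(c)$, and let $\Sigma_A=A\cap\Sigma$, which is finite. If $\Sigma_A=\emptyset$ then $A$ is itself a component of the smooth manifold $f^{-1}(c)\setminus\Sigma$ and so path-connected. Assume then $\Sigma_A\neq\emptyset$ and let $C_1,\ldots,C_r$ be the connected components of the open submanifold $A\setminus\Sigma_A$ of $f^{-1}(c)\setminus\Sigma$. I would first argue that $r$ is finite: for each $p\in\Sigma_A$ and small generic $\varepsilon>0$, the local link $f^{-1}(c)\cap\partial B(p,\varepsilon)$ is a compact smooth $(m-2)$-manifold and hence has only finitely many components, and every $C_j$ with $p\in\overline{C_j}$ must contribute at least one such component; summing over the finite set $\Sigma_A$ (and noting that, by connectedness of $A$, each $C_j$ satisfies $\overline{C_j}\cap\Sigma_A\neq\emptyset$) gives finitely many $C_j$'s. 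Each $C_j$ is a connected smooth manifold, hence path-connected; the local manifold structure of $f^{-1}(c)$ at regular points of $f$ forces $\overline{C_j}\setminus C_j\subset\Sigma_A$; and when $A$ is not a single point, every $p\in\Sigma_A$ is a limit of some $C_j$ (else it would be isolated in the connected set $A$), so $A=\bigcup_{j=1}^r\overline{C_j}$.

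The heart of the argument is to prove that each $\overline{C_j}$ is path-connected, that is, that every $p\in\overline{C_j}\cap\Sigma_A$ is joinable to $C_j$ by a continuous path inside $\overline{C_j}$. The strategy is to establish a local cone structure for $f^{-1}(c)$ at the isolated critical point $p$: in a smooth chart around $p$, I would construct a smooth function $\rho$ with $\rho(p)=0$, $\rho>0$ elsewhere on a neighborhood $U$ of $p$, chosen (by a generic perturbation of the squared Euclidean distance, using that $\nabla f\neq 0$ on $U\setminus\{p\}$) so that $\rho$ restricts to a submersion on $f^{-1}(c)\cap(U\setminus\{p\})$ with no critical values accumulating at $0$. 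Ehresmann's fibration theorem then identifies $f^{-1}(c)\cap\rho^{-1}((0,\varepsilon])$ with a trivial product $F\times(0,\varepsilon]$ for $F=f^{-1}(c)\cap\rho^{-1}(\varepsilon)$ and some small $\varepsilon>0$, so $f^{-1}(c)\cap\rho^{-1}([0,\varepsilon])$ is the closed cone on $F$ with apex $p$, which is path-connected; since $p$ is a limit of $C_j$ the cone meets $C_j$, and any path in the cone from such an intersection point to $p$ yields the required path.

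Putting these pieces together, $A=\bigcup_{j=1}^r\overline{C_j}$ is a finite union of path-connected closed sets, and because $A$ is connected the intersection graph of this cover (vertices the $\overline{C_j}$'s, edges for pairs with non-empty intersection) must be connected, so a standard chaining argument produces a path in $A$ between any two of its points. The main obstacle will be the construction of $\rho$ so that $\rho|_{f^{-1}(c)}$ has no critical points on an entire half-open interval $(0,\varepsilon]$ of values near $p$, rather than merely at a single generic level (as a naive application of Sard's theorem would give); this is precisely the step where the isolation of the critical points of $f$, i.e.\ the finiteness of $\Sigma$, is used in an essential way, and the failure of this local cone picture in the presence of accumulating critical points is exactly why the proposition breaks down for the $S^2$ example of the previous section.
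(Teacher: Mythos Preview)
Your approach is genuinely different from the paper's, and the step you yourself flag as ``the main obstacle'' is a real gap rather than a detail to be filled in. You need a function $\rho$ whose restriction to $f^{-1}(c)\setminus\{p\}$ has no critical values in an entire interval $(0,\varepsilon]$, but neither a generic perturbation nor the isolation of the critical point of $f$ delivers this: Sard's theorem only gives measure--zero critical values, and nothing in your sketch explains how $\nabla f\neq 0$ on $U\setminus\{p\}$ prevents the critical values of $\rho|_{f^{-1}(c)}$ from accumulating at $0$. The local cone structure of a level set of a merely $C^\infty$ function at an isolated critical point is not a standard fact one can invoke (it is standard in the real--analytic or o--minimal setting, which is not the present one); establishing it here would require a substantive new argument, not just an appeal to finiteness of $\Sigma$. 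There is also a smaller looseness in your finiteness argument for the $C_j$: the link $f^{-1}(c)\cap\partial B(p,\varepsilon)$ only sees those $C_j$ that actually meet the sphere, and you have not excluded infinitely many components with $\overline{C_j}=C_j\cup\{p\}$ lying entirely inside $B(p,\varepsilon)$.

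The paper sidesteps all of this. Rather than analysing $f^{-1}(y)$ from the inside, it uses integral curves of a gradient vector field of $f$ on $M$---well controlled precisely because the critical points are finite---to exhibit $f^{-1}(y)$ as a deformation retract of the slab $V=f^{-1}([y-\varepsilon,y+\varepsilon])$. Since $V$ is a compact manifold with boundary it is locally path-connected, so its components and path-components agree and are finite in number; homotopy equivalence forces both the number of components and the number of path-components of $f^{-1}(y)$ to equal that of $V$, and finiteness of these numbers then gives the coincidence of the two decompositions. The hypothesis of finitely many critical points is thus spent on the gradient flow of $f$ in the ambient manifold, where it is immediately decisive, rather than on an auxiliary $\rho$ restricted to a possibly wild level set.
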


\begin{proof}
Take $y \in \R$. If $y$ is a regular value of $f$,
then $f^{-1}(y)$ is a smooth manifold and its
connected components and path-components coincide.
If $y$ is a critical value, then take a sufficiently
small positive real number $\varepsilon$ such that
$y$ is the unique critical value of $f$ in 
$[y - \varepsilon, y + \varepsilon]$.
Then, by an argument using integral curves of
a gradient vector field of $f$ together with our
assumption that $f$ has only finitely many critical
points, we can show that $f^{-1}(y)$
is a deformation retract of $V = f^{-1}([y - \varepsilon,
y + \varepsilon])$.
As $V$ is a compact manifold with boundary and
is locally path-connected, we see that the
connected components and the path-components
coincide with each other and the numbers are
finite. As $f^{-1}(y)$ is homotopy equivalent
to $V$, the number of connected components
(or the number of path-components)
coincides with that of $V$.
This implies that the number of connected components and
that of path-components also coincide with each other
for $f^{-1}(y)$. As the numbers are finite, we see that
the connected components and the path-components coincide
with each other for $f^{-1}(y)$.
This completes the proof.
\end{proof}

The above example shows that this proposition
is no longer true for smooth functions with
finitely many critical values in general.

\section{Realization I}\label{section4}

Let $G$ be a graph without loops.
In this section, we prove that $G$ is 
always realized as the
Reeb space of a certain smooth function 
on a closed manifold with
finitely many critical values. In fact, our
theorem is stronger: (fat) level sets can
also be preassigned.

\begin{dfn}
For $m \geq 2$, consider maps
\begin{eqnarray*}
& & \Phi : \{\text{edges of $G$}\}
\to \{\text{diffeomorphism types of
closed connected} \\
& & \qquad \qquad \qquad \qquad \qquad \qquad
\qquad \qquad \qquad
\text{$(m-1)$--dimensional manifolds}\}, \\
& & \Gamma : \{\text{vertices of $G$}\}
\to \{\text{diffeomorphism types of
compact connected} \\
& & \qquad \qquad \qquad \qquad \qquad \qquad
\qquad \qquad \qquad
\text{$m$--dimensional manifolds}\},
\end{eqnarray*}
such that for each vertex $v$ of $G$, we have
\begin{equation}
\partial (\Gamma(v)) \cong
\sqcup_{v \in e} \Phi(e),
\label{cob}
\end{equation}
i.e.\ the boundary of $\Gamma(v)$ is
diffeomorphic to the disjoint union of the finitely
many manifolds $\Phi(e)$, where $e$ runs over all
edges incident to $v$. The triple $(G, \Phi, \Gamma)$
is called an \emph{$m$--decorated graph}.
\end{dfn}

\begin{dfn}
We say that an $m$--decorated graph $(G, \Phi, \Gamma)$ is
\emph{Reeb realizable}
if there exists a smooth function $f : M
\to \R$ on a closed $m$--dimensional
manifold $M$ with finitely many critical values
such that
\begin{enumerate}
\item $G$ is identified with the Reeb space $W_f$,
\item for each edge $e$ of $G = W_f$, we have
$q_f^{-1}(x) \cong \Phi(e)$, $\forall x \in \Int{e}$, 
\item for each vertex $v$ of $G = W_f$, we have
$q_f^{-1}(N(v)) \cong \Gamma(v)$, where $N(v)$ is
a small regular neighborhood of $v$ in $G$.
\end{enumerate}
See Figure~\ref{fig3}.
\end{dfn}

\begin{figure}[h]
\centering
\psfrag{e}{$e$}
\psfrag{q}{$q_f$}
\psfrag{f}{$f$}
\psfrag{G}{$\Gamma(v) \subset M$}
\psfrag{v}{$v$}
\psfrag{F}{$\Phi(e)$}
\psfrag{W}{$W_f = G$}
\psfrag{b}{$\bar{f}$}
\psfrag{N}{$N(v)$}
\includegraphics[width=0.9\linewidth,height=0.7\textheight,
keepaspectratio]{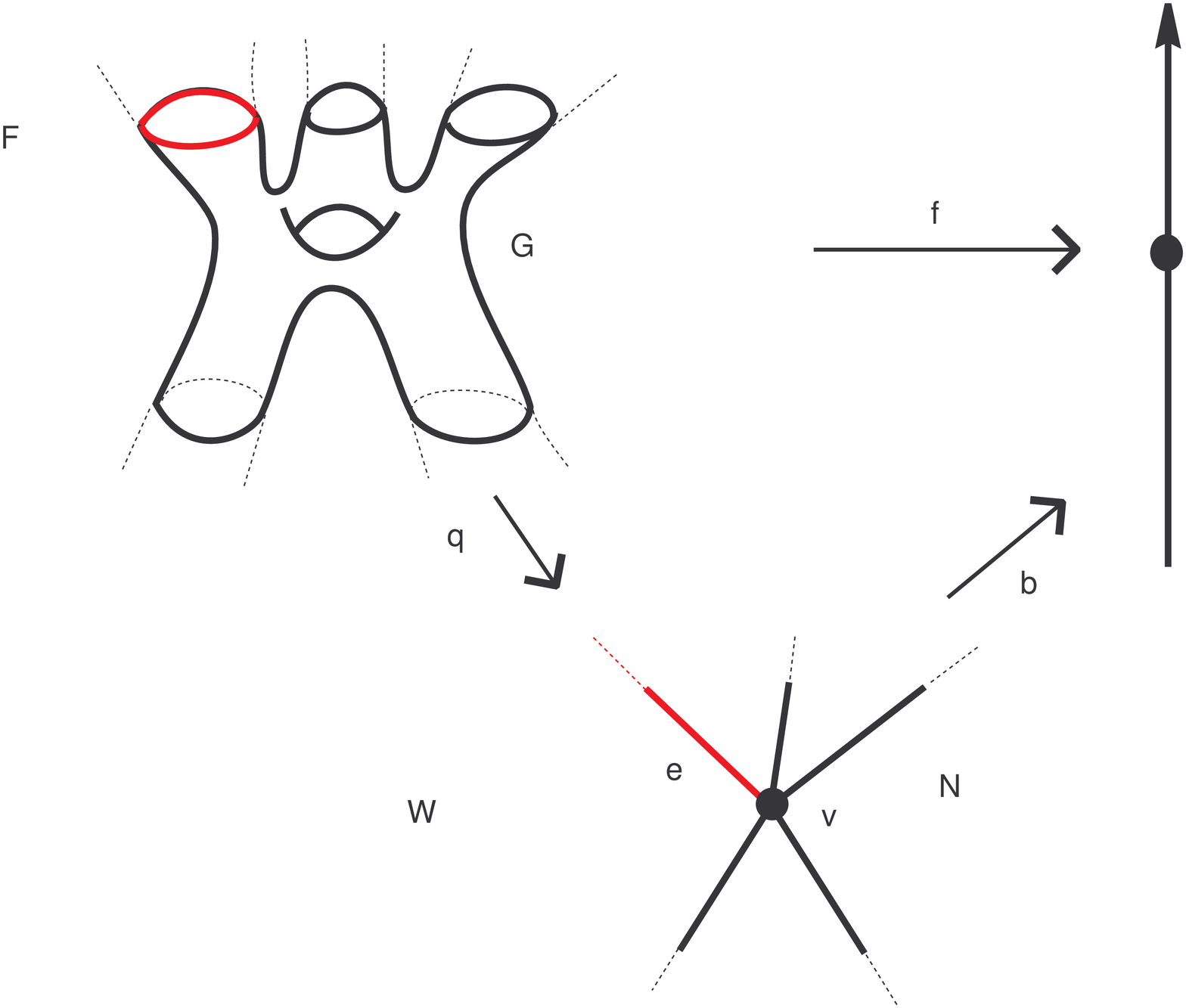}
\caption{Realizing an $m$--decorated graph $(G, \Phi, \Gamma)$, where
$\dim{M} = m$}
\label{fig3}
\end{figure}

Then, we have the following realization theorem.

\begin{thm}\label{thm2}
For $m \geq 2$,
every $m$--decorated graph $(G, \Phi, \Gamma)$ is Reeb realizable.
\end{thm}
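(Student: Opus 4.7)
The plan is to construct $M$ and $f$ by gluing one cylindrical piece for each edge of $G$ to one compact piece for each vertex, choosing a piecewise smooth $f$ that is automatically smooth across the gluings.

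First, assign a real number $c_v$ to each vertex $v \in V(G)$ so that adjacent vertices receive distinct values; this is possible because $G$ has no loops (any injection $V(G) \to \R$ works). For each edge $e$ with endpoints $v, v'$ satisfying $c_v < c_{v'}$, fix a smooth function $\eta_e : [c_v, c_{v'}] \to [c_v, c_{v'}]$ that is identically $c_v$ on $[c_v, c_v + \delta_e]$, identically $c_{v'}$ on $[c_{v'} - \delta_e, c_{v'}]$, and strictly increasing in between. Take the edge piece to be $\Phi(e) \times [c_v, c_{v'}]$ with the smooth function $f_e(x, t) = \eta_e(t)$, and the vertex piece to be $\Gamma(v)$ with the constant function $f_v \equiv c_v$.

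The consistency condition $\partial \Gamma(v) \cong \sqcup_{v \in e} \Phi(e)$ supplies, for each edge $e$ incident to $v$, a diffeomorphism between the component $\partial_e \Gamma(v)$ of $\partial \Gamma(v)$ and the appropriate boundary $\Phi(e) \times \{c_v\}$ or $\Phi(e) \times \{c_{v'}\}$ of the edge piece. Gluing all pieces along these diffeomorphisms by standard collar-neighborhood techniques produces a closed smooth $m$-manifold $M$, and the piecewise formulas combine into a smooth function $f : M \to \R$, because both formulas are identically equal to the constant $c_v$ on a neighborhood of every gluing boundary at height $c_v$. All critical points lie in the vertex pieces or in the flat portions of the edge pieces, so the critical value set is contained in the finite set $\{c_v : v \in V(G)\}$.

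Finally, verify the Reeb realization. For any $s$ not of the form $c_v$, each edge $e$ whose open interval $(c_v, c_{v'})$ contains $s$ contributes one level-set component $\Phi(e) \times \{\eta_e^{-1}(s)\} \cong \Phi(e)$, giving an interior point of edge $e$ in $W_f$. The connected component of $f^{-1}(c_v)$ containing $\Gamma(v)$ is the union of $\Gamma(v)$ with the flat collars $\Phi(e)\times[c_v,c_v+\delta_e]$ (or $\Phi(e)\times[c_{v'}-\delta_e,c_{v'}]$) from each incident edge, a manifold diffeomorphic to $\Gamma(v)$ itself, which yields the vertex $v$ of $W_f$. The identification $W_f \cong G$ is then set up exactly as in the proof of Theorem~\ref{thm:Reeb}, and the conditions $q_f^{-1}(x) \cong \Phi(e)$ and $q_f^{-1}(N(v)) \cong \Gamma(v)$ follow directly. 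I expect the only genuine obstacle to be ensuring that the simultaneous smooth gluing across all boundary components really produces a smooth manifold and a smooth function; this is handled by the flat-plateau design of $\eta_e$ together with standard collar-neighborhood and manifold-gluing lemmas, and condition~\eqref{cob} enters precisely once, to supply the required boundary diffeomorphisms.
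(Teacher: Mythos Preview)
Your proof is correct and follows essentially the same strategy as the paper's: assign real values to vertices, put the constant function on each vertex piece $\Gamma(v)$, put a suitably reparametrized height function on each edge cylinder $\Phi(e)\times[\,\cdot\,,\,\cdot\,]$, and glue via condition~\eqref{cob}. The only technical difference is cosmetic: the paper uses a strictly increasing reparametrization $\varphi_e$ whose derivatives of all orders vanish at the endpoints (so the edge pieces carry no critical points at all), whereas you use an $\eta_e$ with genuine flat plateaus near the ends (so the edge pieces contribute extra critical points, but these get absorbed into the same level-set component as $\Gamma(v)$, as you note). Either device makes the glued function smooth, and the verification of $W_f\cong G$ is the same.
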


\begin{proof}
Let us first construct a continuous function 
$h : G \to \R$ such that it is an embedding
on each edge. Such a function can easily be
constructed by first defining $h$ on the set
of vertices so that it is injective, and then
by extending it on the closure of each edge
``linearly''.

By $h$, we can identify each edge of $G$ with
an open interval of $\R$.

Now, for each vertex $v$, define $f_v : \Gamma(v) \to \R$
as a constant function by $f_v(\Gamma(v)) = h(v)$.
For each edge $e$, identified with a bounded open
interval $(a, b)$, consider a smooth function $\varphi_e : e \to e$
such that
\begin{enumerate}
\item $\varphi_e$ is a monotone increasing diffeomorphism,
\item $\varphi_e$ can be extended to a smooth homeomorphism
$[a, b] \to [a, b]$ such that the $r$--th derivatives of $f$
at $a$ and $b$ all vanish for all $r \geq 1$.
\end{enumerate}
Such a smooth function can be constructed, for example,
by integrating a bump function.
Then, for each vertex $e$, define 
the smooth function $f_e : \Phi(e) \times e \to \R$
by $h \circ \varphi_e \circ p_e$, where $p_e : \Phi(e) \times
e \to e$ is the projection to the second factor.

Then, by condition (\ref{cob}), we can glue
the $m$--dimensional manifolds $\Gamma(v)$ and
$\Phi(e) \times e$ over all vertices $v$ and all
edges $e$ of $G$ in such a way that 
$\partial (\Gamma(v))$ and 
$\sqcup_{v \in e} \Phi(e)$ are identified by
diffeomorphisms. Let us denote by $M$
the resulting closed $m$--dimensional manifold.
Then, we can also glue the smooth functions $f_v$ and $f_e$
over all vertices $v$ and all edges $e$ of $G$
in order to obtain a continuous function $f : M \to \R$.
This function $f$ is smooth by construction.

Then, by construction and by our assumption
that $\Gamma(v)$ and $\Phi(e)$ are connected, 
the Reeb space $W_f$ of $f$ can be identified with $G$.
Furthermore, we see easily that
for each edge $e$ of $G = W_f$, we have
$q_f^{-1}(x) \cong \Phi(e)$, $\forall x \in \Int{e}$, 
and for each vertex $v$ of $G = W_f$, we have
$q_f^{-1}(N(v)) \cong \Gamma(v)$ for a small regular
neighborhood $N(v)$ of $v$ in $G$.
This completes the proof.
\end{proof}

\begin{rem}
We have a similar theorem for functions on
compact manifolds with boundary as well. In this case,
$\Phi$ associates to each edge of $G$ a compact connected
$(m-1)$--dimensional manifold possibly with boundary, 
and $\Gamma$ associates
to each vertex of $G$ a compact connected $m$--dimensional
manifold with corners. Condition (\ref{cob})
should appropriately be modified.
\end{rem}

\begin{cor}
For all $m \geq 2$,
every graph without
loops is the Reeb space of a smooth
function on a closed $m$--dimensional 
manifold with finitely many critical values.
\end{cor}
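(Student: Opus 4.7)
The plan is to reduce the statement to Theorem~\ref{thm2} by exhibiting a canonical $m$-decoration of an arbitrary loopless graph $G$. First, I would define $\Phi$ uniformly by assigning $\Phi(e) = S^{m-1}$ to every edge $e$; since $m \geq 2$, this is a closed connected $(m-1)$-dimensional manifold. Next, for each vertex $v$ of degree $d_v$, I would set $\Gamma(v) = S^m$ if $d_v = 0$, and otherwise $\Gamma(v) = S^m \setminus (\sqcup_{i=1}^{d_v} \Int{D_i^m})$, where $D_1^m, \ldots, D_{d_v}^m$ are $d_v$ pairwise disjoint smoothly embedded closed $m$-disks in $S^m$. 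Each such $\Gamma(v)$ is a compact connected $m$-dimensional manifold (connectedness uses $m \geq 2$, so that removing finitely many disks from $S^m$ does not disconnect it).

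The key consistency check is condition~\eqref{cob}. Because $G$ has no loops, every edge incident to $v$ contributes exactly one element to the multiset $\{e : v \in e\}$, so this multiset has cardinality $d_v$. Consequently
$$\partial (\Gamma(v)) \cong \sqcup_{i=1}^{d_v} S^{m-1} \cong \sqcup_{v \in e} \Phi(e),$$
and the triple $(G, \Phi, \Gamma)$ is an $m$-decorated graph. Applying Theorem~\ref{thm2} then yields a smooth function $f : M \to \R$ on a closed $m$-dimensional manifold with finitely many critical values whose Reeb space $W_f$ is identified with $G$, establishing the corollary.

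I do not anticipate any substantive obstacle here: the argument is essentially a repackaging of Theorem~\ref{thm2}, and the main observation is simply that punctured spheres supply compact connected $m$-manifolds with arbitrarily many prescribed spherical boundary components. The loop-free hypothesis is indispensable at the level of Theorem~\ref{thm2} itself (where the height function $h$ must embed each edge into $\R$); the boundary-matching count at each vertex would otherwise still make sense for graphs with loops by counting loops with multiplicity two, so the obstruction really lies in the linear realization of edges, not in the decoration step carried out here.
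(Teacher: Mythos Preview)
Your proposal is correct and matches the paper's approach exactly: the corollary is stated without a separate proof, but Remark~\ref{rem1} immediately afterward spells out precisely your decoration---$S^{m-1}$ on every edge and $S^m \setminus (\sqcup_{k=1}^{d} \Int{D^m_k})$ at each vertex of degree $d$---and then invokes Theorem~\ref{thm2}. Your handling of isolated vertices and the verification of condition~\eqref{cob} are the only details you add beyond what the paper makes explicit, and both are fine.
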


\begin{rem}\label{rem1}
In the above corollary, we can even construct
such a smooth function in such a way that
\begin{enumerate}
\item every regular level set is a finite
disjoint union of standard $(m-1)$--spheres, and
\item the source manifold is diffeomorphic to
$S^m$ or a connected sum of a finite number of
copies of $S^1 \times S^{m-1}$.
\end{enumerate}
This can be achieved by associating $S^{m-1}$
to each edge, $S^m \setminus (\sqcup_{k=1}^d \Int{D^m_k})$
to each vertex with degree $d$, and by choosing
the attaching diffeomorphisms appropriately, where
$D^m_k$, $k = 1, 2, \ldots, d$, are disjoint
$m$--dimensional disks embedded in $S^m$.
\end{rem}

\begin{rem}
Some results similar to Theorem~\ref{thm2} and Remark~\ref{rem1}
are presented in \cite{K1, K2}.
\end{rem}

\section{Realization II}\label{section5}

Let $f : M \to \R$ be a smooth function
on a closed manifold of dimension $m \geq 2$
with finitely many critical values.
Then, it is easy to show that the homomorphism
$(q_f)_* : \pi_1(M) \to \pi_1(W_f)$ induced by the
quotient map is surjective. See 
\cite[Th\'{e}or\`{e}me~6]{R} for a related statement.
(In fact,
this is true for an arbitrary continuous function $f$
as long as $W_f$ is semilocally
simply-connected. See \cite{CGM}.)

We have the following theorem, which corresponds to
the converse of this fact.

\begin{thm}\label{thm3}
Let $M$ be a smooth closed connected manifold
of dimension $m \geq 2$,
$G$ a connected graph
without loops, and $Q : M \to G$ a
continuous map such that $Q_* : \pi_1(M)
\to \pi_1(G)$ is surjective.
Then, there exists a smooth function 
$f : M \to \R$ with finitely many critical
values such that
\begin{enumerate}
\item $G$ can be identified with $W_f$ in such a way
that the vertices are identified with the $q_f$--images
of the level set components containing
critical points,
\item $q_f : M \to W_f = G$ is
homotopic to $Q$.
\end{enumerate}
\end{thm}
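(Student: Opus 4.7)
The plan is to find, after homotopy of $Q$, a disjoint family of closed two-sided $(m-1)$-dimensional submanifolds $\{N_e\}_{e \in E(G)}$ of $M$ indexed by the edges of $G$, such that cutting $M$ along $\bigsqcup_e N_e$ yields connected compact submanifolds $\{P_v\}_{v \in V(G)}$ indexed by the vertices of $G$, with incidence matching that of $G$. Once this is in place, the triple $(G, \{N_e\}, \{P_v\})$ presents $M$ as an $m$-decorated graph, and the construction of $f$ will proceed essentially as in the proof of Theorem~\ref{thm2}; the homotopy $q_f \simeq Q$ will be automatic from the shared decomposition.

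First I would fix an interior point $p_e$ and a Euclidean neighborhood $I_e$ on each edge $e$ of $G$, then homotope $Q$ so that it is smooth on $Q^{-1}\bigl(\bigsqcup_e I_e\bigr)$ and transverse to each $p_e$. Setting $N_e = Q^{-1}(p_e)$ yields pairwise disjoint closed two-sided codimension-one submanifolds of $M$. Form the finite auxiliary graph $G'$ whose vertices are the connected components of $M \setminus \bigsqcup_e N_e$ and whose edges are the connected components of $\bigsqcup_e N_e$, with the obvious incidence. Then $Q$ factors up to homotopy as $Q \simeq \pi \circ q'$, where $q' : M \to G'$ is the natural quotient and $\pi : G' \to G$ is a graph morphism that is a local homeomorphism on the interior of every edge; in particular $\pi_* : \pi_1(G') \to \pi_1(G)$ is surjective.

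The main work, and the expected principal obstacle, is to reduce to the case that $\pi$ is a homeomorphism by using the surjectivity hypothesis through ambient surgery on the $N_e$'s. Whenever a piece $P$ meets two components of the same $N_e$ with matching co-orientations, so that $\pi$ fails to be a local isomorphism at the vertex of $G'$ indexed by $P$, I would find an embedded arc in $P$ disjoint from every $N_{e'}$ joining those two components of $N_e$, and perform an ambient zero-surgery on $N_e$ along a small tubular neighborhood of the arc: remove an $(m-1)$-disk from each of the two components of $N_e$ and glue in a connecting cylinder running through $P$. This operation leaves $M$ unchanged but merges the two components of $N_e$ together with two adjacent pieces on the opposite side, so it strictly decreases the combinatorial complexity of $G'$. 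After finitely many such surgeries, the local immersion $\pi$ becomes a covering of graphs, and a connected covering of graphs that is surjective on $\pi_1$ is an isomorphism, so $G' = G$. The subtle points are termination and the ability always to locate the required arcs in the correct piece; this is exactly where the hypothesis on $Q_*$ is essential, since a loop in $G$ that could separate two preimage components must lift to an arc in $M$ detecting them.

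With $G' = G$ in hand, the proof of Theorem~\ref{thm2} applies verbatim: choose an injective value assignment $h : V(G) \to \R$, extend $h$ linearly on each edge, define $f$ to be constant $h(v)$ on each $\overline{P_v}$ and to equal $h \circ \varphi_e$ on each $N_e$-collar, where $\varphi_e$ is a monotone smooth self-homeomorphism of $e$ whose derivatives all vanish at the endpoints so that the pieces glue smoothly. The resulting smooth $f : M \to \R$ has critical value set exactly $\{h(v) : v \in V(G)\}$, its Reeb space $W_f$ is canonically identified with $G$ in such a way that the vertices correspond to the $q_f$-images of level-set components containing critical points, and $q_f$ is homotopic to $Q$ by a straight-line homotopy inside $G$ (both maps send $P_v$ into an arbitrarily small neighborhood of $v$ and each $N_e$-collar into $e$).
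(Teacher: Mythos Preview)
Your overall plan matches the paper's: manufacture disjoint connected two--sided hypersurfaces in $M$, one per edge of $G$, whose complementary pieces are connected and indexed by the vertices with the correct incidences, and then run the Theorem~\ref{thm2} construction. Where you diverge is in how those hypersurfaces are produced, and there the argument has a real gap.

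You pull back a point $p_e$ from \emph{every} edge and try to convert $\pi : G' \to G$ into an isomorphism by repeatedly removing folds. Fold removal does terminate (each move lowers $|E(G')|$) and yields a graph immersion, but your claim that the result is a \emph{covering} is unjustified. A $\pi_1$--surjective immersion of finite connected graphs is injective (it factors through the cover of $G$ corresponding to the image subgroup, which here is all of $\pi_1(G)$), and hence is an isomorphism only if it is already surjective --- and nothing you do guarantees surjectivity. Concretely, let $G$ be any tree with at least one edge and take $Q$ constant; then every $N_e$ is empty, $G'$ is a single vertex, there are no folds to remove, and $\pi$ is certainly not an isomorphism. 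The hypotheses allow this since $\pi_1(G)=0$. More generally, whenever $e$ is a bridge of $G$, nothing forces $Q$ to cross $p_e$. Your scheme is repairable --- first homotope $Q$ to be surjective onto $G$ (possible since $\dim M \geq 2 > \dim G$), so that every $N_e$ is nonempty; then after fold removal $\pi$ is a surjective $\pi_1$--surjective immersion, hence an isomorphism --- but this step is missing. You should also record, as the paper does, that in dimension $m=2$ the surgery arc does not disconnect $P$ because its endpoints lie on distinct boundary components of $\overline{P}$.

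The paper sidesteps the issue by a different organization. It pulls back points from only $r=\beta_1(G)$ edges forming a cycle--cutting set and uses the $\pi_1$--surjectivity of $Q$ (this is the only place it enters) to surger each of those $r$ preimages down to a single connected hypersurface. Cutting along these turns $G$ into a tree $G'$ and $M$ into a connected compact manifold $M'$ with boundary; for the remaining edges the paper then \emph{discards $Q$ entirely} and builds the required separating hypersurfaces directly inside $M'$, by iterated tubing of boundary--parallel copies. This cleanly separates the homotopical part (realizing the rank--$r$ free quotient) from the purely combinatorial tree part, and never runs into the surjectivity problem.
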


 
\begin{rem}
A similar result also
holds for functions on compact manifolds
with non-empty boundary.
\end{rem}

\begin{rem}
A similar result has been obtained by
Michalak \cite{M1, M2} (see also Gelbukh \cite{G3},
Marzantowicz and Michalak \cite{MM}).
For $m \geq 3$, one can realize
a given graph as the Reeb space of a Morse function
on a closed manifold of dimension $m$
up to homeomorphism.
Our theorem is slightly different from such results
in that we not only realize the topological
structure of a given graph but we also 
realize the given graph structure.
We construct smooth functions with finitely
many critical values such that the images by the quotient map
of the level set components containing critical points 
exactly coincide with the vertices of the graph.

For example, if $G$ is a connected graph consisting of two
vertices and a unique edge connecting them, then for
any closed connected manifold $M$ of dimension $m \geq 2$, there
exists a smooth function $f : M \to \R$ with
exactly two critical values such that $W_f$ has the
graph structure equivalent to $G$. 
Note that according to Michalak \cite{M1, M2},
there exists a Morse function $g : M \to \R$
such that $W_f$ is homeomorphic to $G$; however,
the graph structure of $W_f$ may not be equivalent to that
of $G$ in general. In fact, if $M$ admits a smooth function
$f : M \to \R$ with exactly two (possibly degenerate)
critical points, then
$M$ must be homeomorphic to the sphere $S^n$ \cite{Milnor1,
Milnor2}.
\end{rem}

\begin{proof}[Proof of Theorem~\textup{\ref{thm3}}]
Let $r$ denote the first Betti number of $G$. Then,
we can find points $x_1, x_2, \ldots, x_r$
on edges of $G$ such that $G \setminus \{x_1, x_2, \ldots,
x_r\}$ is connected and contractible.
Let $h : G \to \R$ be a continuous
map such that $h$ is an embedding on each edge.
By using $h$, we induce a differentiable structure
on each edge of $G$.
By perturbing $Q$ by homotopy if necessary,
we may assume that $Q$ is transverse to $x_1, x_2,
\ldots, x_r$.
Then, each $Q^{-1}(x_i)$ is a compact
$(m-1)$--dimensional submanifold of $M$, which
might not be connected.
Let $\tilde{r}$ denote the total number
of connected components of $\cup_{i=1}^r Q^{-1}(x_i)$.

Let us show that we may arrange, by homotopy of $Q$,
so that $\tilde{r} = r$, i.e.\ each $Q^{-1}(x_i)$ is connected.

Suppose that $\tilde{r} > r$. Then, by re-ordering the points,
we may assume that $E_1 = Q^{-1}(x_1)$ is not connected.
As $M$ is connected and $m = \dim{M}
\geq 2$, there is a smooth embedded curve $\gamma$
in $M$ whose end points lie in distinct components
of $E_1$.
Note that $Q|_{\gamma}$ is a loop in $G$ based at $x_1$.
Since $Q$ induces an epimorphism $Q_* : \pi_1(M, \tilde{x}_1)
\to \pi_1(G, x_1)$, where $\tilde{x}_1$
is the initial point of $\gamma$, we may assume
that $Q|_{\gamma}$ represents the neutral element
of $\pi_1(G, x_1)$ by adding a loop based at $\tilde{x}_1$
to $\gamma$ and by modifying it by a suitable homotopy. 
We may further assume that $\gamma$
is transverse to $Q^{-1}(x_i)$ for all $i$.

First suppose that $\gamma$ does not intersect
with $Q^{-1}(x_i)$ for $i > 1$ and that $\gamma$
intersects with $E_1 = Q^{-1}(x_1)$ only at the end points.
In this case, $Q|_{\gamma}$ starts $x_1$ in a certain, say
positive, direction of the edge on which $x_1$ lies, and
returns to $x_1$ in the reverse, say negative, direction.
Furthermore, $Q|_{\gamma}$ does not intersect
$x_i$ for $i > 1$. Thus, $Q|_{\gamma}$ is null
homotopic in $G$ relative to $x_1$ and such a homotopy
can be constructed in such a way that
the paths avoid $\{x_1, x_2, \ldots, x_r\}$
during the homotopy except at the end points
and except for the final constant path.
Let $N(\gamma) \cong \gamma \times D^{m-1}$ 
be a small tubular neighborhood of $\gamma$
in $M$ such that $N(\gamma) \cap E_1
\cong \partial \gamma \times D^{m-1}$.
We may assume that $N(\gamma)$ does not
intersect $Q^{-1}(x_i)$ for $i > 1$.
Let $N'(\gamma)$ be a smaller tubular neighborhood
corresponding to $\gamma \times D^{m-1}_{1/3}$,
where $D^{m-1}$ is the unit disk in $\R^{m-1}$
and $D^{m-1}_\rho$ is the disk with radius $\rho > 0$ with the same center.
We can first modify $Q$ by homotopy supported on $N(\gamma)$
so that $Q|_{\gamma \times \{p\}}$ coincides with $Q|_{\gamma}$
for all $p \in D^{m-1}_{2/3}$.
As $Q|_{\gamma}$ is null-homotopic in $G$ relative to $x_1$
as described above,
we may further modify $Q$ so that $Q|_{N'(\gamma)}$
is a constant map to $x_1$.

At this stage, we have $Q^{-1}(x_1) = E_1 \cup N'(\gamma)$. 
Let $E_1'$ be the smooth $(m-1)$--dimensional submanifold
of $M$ obtained from $(E_1 \setminus (\partial \gamma 
\times \Int{D^{m-1}_{1/3}}))
\cup (\gamma \times \partial D^{m-1}_{1/3})$ by smoothing 
the corner. Then, we
may further modify $Q$ by homotopy supported on a neighborhood
of $N'(\gamma)$ in such a way that $Q$ is transverse to $x_1$
and that $Q^{-1}(x_1) = E_1'$. This can be achieved by
sending the part $\gamma \times \Int{D^{m-1}_{1/3}}$ to
the negative side of $x_1$.
Then, the number of connected components of $Q^{-1}(x_1)$
decreases by $1$. As $Q^{-1}(x_i)$ for $i > 1$ stay the same,
the total number of connected components of $\cup_{i=1}^r Q^{-1}(x_i)$
decreases by $1$.

Now consider the case where
$\Int{\gamma}$ intersects
with $Q^{-1}(x_i)$ for some $i \geq 1$.
We fix a positive direction on each
edge on which $x_1, x_2, \ldots, x_r$ lie.
Let $\omega$ be the word on $x_1, x_2, \ldots, x_r$
constructed by associating $x_i$ (or $x_i^{-1}$)
every time $Q|_{\gamma}$ passes through $x_i$ 
in the positive (resp.\ negative) direction.
As $\pi_1(G, x_1)$ is a free group of rank $r$
freely generated by elements corresponding to $x_i$,
$i = 1, 2, \ldots, r$, and $Q|_{\gamma}$
represents the neutral element of $\pi_1(G, x_1)$,
we see that $x_\ell x_\ell^{-1}$ or $x_\ell^{-1}x_\ell$
appears in the word $\omega$ for some $\ell$ with
$1 \leq \ell \leq r$. 
Let $\gamma_\ell$ be the sub-arc of $\gamma$
that corresponds to that sub-word: i.e.\ $Q|_{\gamma_\ell}$
starts $x_\ell$ in a certain direction and returns
to $x_\ell$ in the reverse direction, where $Q|_{\Int{\gamma_\ell}}$
does not intersect $x_1, x_2, \ldots, x_r$.

If the end points of $\gamma_\ell$
lie on the same connected component of $Q^{-1}(x_\ell)$, then
for $\gamma$, we can replace the sub-arc $\gamma_\ell$
by a path in $Q^{-1}(x_\ell)$ connecting the end points of $\gamma_\ell$
and slightly modify it so as
to get a new smooth embedded curve $\gamma'$ such that the
corresponding word has strictly fewer letters.

On the other hand, if the end points lie on distinct
components of $Q^{-1}(x_\ell)$, then we can modify $Q$ by homotopy
as described above so that we decrease the total number of
connected components of $\cup_{i=1}^r Q^{-1}(x_i)$.

In this way, we get a continuous map $Q$
homotopic to the original one such that
$Q$ is transverse to $x_1, x_2, \ldots, x_r$ and that
$Q^{-1}(x_i)$ are all connected.

Let $M'$ be the compact $m$--dimensional manifold
with boundary obtained by cutting $M$ along
$\cup_{i=1}^r Q^{-1}(x_i)$. Note that $M'$
is connected. Let $G'$ be the graph
obtained by cutting $G$ at $x_1, x_2, \ldots, x_r$.
Note that $G'$ is a tree and has $2r$ \emph{distinguished vertices}
corresponding to $x_1, x_2, \ldots, x_r$. We denote
the distinguished vertices corresponding to $x_i$ by
$x_{i+}$ and $x_{i-}$, $i = 1, 2, \ldots, r$.
Then, we have a natural
continuous map $Q' : M' \to G'$ induced by $Q$, where
$\partial M' = \cup_{i=1}^r ((Q')^{-1}(x_{i+}) \cup
(Q')^{-1}(x_{i-}))$.
In the following, we will construct disjoint $(m-1)$-dimensional
closed connected submanifolds of $M'$ that correspond bijectively
to the edges of $G'$ in such a way that the closures of the connected
components of the complement in $M'$ correspond bijectively to
the vertices of $G'$ and that a condition similar to (\ref{cob})
is satisfied for each vertex of $G'$ except for the distinguished
vertices.

First, for each edge incident to a distinguished
vertex $x_{i\pm}$, we associate to the edge
a submanifold in $\Int{M'}$ parallel and close
to the boundary component $(Q')^{-1}(x_{i\pm})$ of $M'$.
Note that
if an edge is incident to two distinct distinguished
vertices, then $G$ must be a circle and has no vertex, which
is a contradiction. So, this submanifold is well defined.

Let $e$ be an edge, not incident to a distinguished vertex.
We denote by $x_e$ a point in the interior of $e$.
Since $G'$ is a tree, $G' \setminus \{x_e\}$
has exactly two connected components, say $G'_1$ and $G'_2$.
Let $V_j$ denote the set of
distinguished vertices belonging to $G'_j$, $j = 1, 2$.
Then, we can construct an $(m-1)$--dimensional closed
connected submanifold $E_e$ of $M'$ such that
\begin{enumerate}
\item $E_e \subset \Int{M'}$,
\item $E_e$ is disjoint from the submanifolds
corresponding to the edges incident to distinguished vertices, 
\item $M' \setminus E_e$ has exactly two
connected components, say $M'_1$ and $M'_2$, and
\item the submanifolds corresponding to 
the edges incident to distinguished vertices in $V_j$ 
are contained in $M'_j$,
$j = 1, 2$, after renumbering $M'_1$ and $M'_2$ if necessary.
\end{enumerate}

Such a submanifold $E_e$ can be constructed, for example,
as follows. Let $F_k$, $k = 1, 2, \ldots, a$,
be the $(m-1)$--dimensional submanifolds associated with
the edges incident to the distinguished vertices in $V_1$.
Since $M'$ is connected, and $F_k$ are parallel to
boundary components, we can find smoothly embedded arcs
$\alpha_k$, $k = 1, 2, \ldots, a-1$, in $M'$ such that
\begin{enumerate}
\item $\alpha_k$ intersects $F_k$ and $F_{k+1}$
exactly at the end points and the intersections are transverse,
\item $\alpha_k$ does not intersect $F_j$, $j \neq k, k+1$,
\item $\alpha_1, \alpha_2, \ldots, \alpha_{a-1}$
are disjoint.
\end{enumerate}
If $m = \dim{M'} \geq 3$, then such arcs as above can easily be found.
When $m=2$, consider an arbitrary properly embedded 
arc in a compact connected surface such that the end points
lie in different components of the boundary. Then, such an arc
is never separating. Therefore, a set of arcs as above
can be found for this case as well.
Now, consider small tubular neighborhoods $h_k
\cong [0, 1] \times D^{m-1}$ of $\alpha_k$
as $1$--handles attached to $F_1, F_2, \ldots, F_a$,
and use them to perform surgery on $F_0 = \cup_{k=1}^a F_k$ 
so that we get
$$E_e = (F_0 \setminus ((\cup_{k=1}^{a-1}h_k) \cap F_0))
\cup (\cup_{k=1}^{a-1} dh_k),$$
where $dh_k \cong [0, 1] \times \partial D^{m-1}$.
After the surgery, we smooth the corners so that $E_e$ 
is a smoothly embedded $(m-1)$--dimensional submanifold of $M'$.
By construction, it is connected and closed.
We can further move $E_e$ by isotopy so that it is
disjoint from $\cup_{k=1}^a F_k$.
Then, we can easily check that $E_e$ has the
desired properties.

For the moment, we ignore $Q'$, and cut $M'$ along
$E_e$ to get two compact connected manifolds.
We also cut $G'$ at $x_e$ into two trees, where
each tree has an additional distinguished vertex.
Then, we continue
the same procedures to get an $(m-1)$--dimensional
connected closed submanifold corresponding to an edge
not incident to a distinguished vertex. As the number of edges is finite,
this process will terminate in a finite number of steps.
Finally, we get a family of closed connected $(m-1)$--dimensional
submanifolds of $M'$
that correspond bijectively to the edges of $G'$.
By construction, each component of the complement 
corresponds to a unique vertex of $G'$ in such a way that
the closure contains an $(m-1)$--dimensional
submanifold $E_e$ corresponding to an edge $e$
if and only
if the vertex is incident to the edge $e$.

Recall that $M$ can be reconstructed
from $M'$ by identifying pairs of boundary components.
In this sense, we identify the submanifolds in $\Int{M'}$
with those in $M$.
Likewise, $G$ is also reconstructed from $G'$.
Now we associate the above constructed $(m-1)$--dimensional
submanifold $E_e$ to each edge $e$ of $G$, where
for the edge $e$ containing $x_i$,
we put $E_e = Q^{-1}(x_i)$, $i = 1, 2, \ldots, r$.
Furthermore, we associate to each vertex of $G$
the closure of the component as described in the
previous paragraph. 
All these ingredients show that we have
constructed an $m$--decorated graph as in \S\ref{section4}.
By construction, the condition (\ref{cob}) is automatically
satisfied.

Then, by using the techniques used in the proof of Theorem~\ref{thm2},
we can construct a smooth function $f : M'' \to \R$
with finitely many critical values that realizes the
$m$--decorated graph as the Reeb space. By using the original identification
maps for diffeomorphisms for gluing the pieces when
constructing $M''$, we can arrange so that $M''$ is naturally identified 
with $M$.
Then, the Reeb space $W_f$ can also be naturally identified with $G$.

Finally, we should note that the quotient map
$q_f : M \to W_f = G$ is homotopic to $Q$.
This follows from the fact that $G \setminus \{x_1, x_2, \ldots, x_r\}$
is contractible.
This completes the proof.
\end{proof}

For a finitely generated group $H$, set
$$\mathrm{corank}(H) = \max\{r \,|\, \text{There exists
an epimorphism } H \to F_r\},$$
where $F_r$ is the free group of rank $r \geq 0$.
This is called the \emph{co-rank} of the
group $H$ (for example, see \cite{G1, G2}).

As an immediate corollary to Theorem~\ref{thm3},
we get the following.

\begin{cor}
Let $M$ be a smooth closed connected manifold
of dimension $m \geq 2$, and
$G$ a connected graph
without loops.
Then, $G$ arises as the Reeb space
of a certain smooth function on $M$ with
finitely many critical values if and only if
$\beta_1(G) \leq \mathrm{corank}(\pi_1(M))$, where
$\beta_1$ denotes the first betti number.
\end{cor}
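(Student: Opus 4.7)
The proposal is to prove both directions separately, both being short deductions from Theorem~\ref{thm3} combined with standard facts about the homotopy type of graphs.

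For the necessity direction, suppose $G$ is realized as $W_f$ for some smooth $f : M \to \R$ with finitely many critical values. I would invoke the remark stated immediately before Theorem~\ref{thm3}, which asserts that the induced map $(q_f)_* : \pi_1(M) \to \pi_1(W_f)$ is surjective. Since $G$ is a finite graph (viewed as a $1$--dimensional CW complex), its fundamental group is free, of rank equal to $\beta_1(G)$; hence we obtain an epimorphism $\pi_1(M) \twoheadrightarrow F_{\beta_1(G)}$, giving $\mathrm{corank}(\pi_1(M)) \geq \beta_1(G)$ directly from the definition of co-rank.

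For the sufficiency direction, assume $r := \beta_1(G) \leq \mathrm{corank}(\pi_1(M))$. By definition of co-rank there is an epimorphism $\phi : \pi_1(M) \to F_r$, and fixing an identification $F_r \cong \pi_1(G)$ via a system of generating loops in $G$, I regard $\phi$ as an epimorphism onto $\pi_1(G)$. The key point is to upgrade this to a continuous map $Q : M \to G$ with $Q_* = \phi$. Since $G$ is a $1$--dimensional CW complex we have $\pi_n(G) = 0$ for all $n \geq 2$, i.e.\ $G$ is aspherical and serves as a $K(\pi_1(G), 1)$. Standard obstruction theory (or the classifying property of Eilenberg--MacLane spaces) then produces the desired $Q : M \to G$ realizing $\phi$ on fundamental groups; once this is in hand, Theorem~\ref{thm3} applied to $Q$ yields a smooth function $f : M \to \R$ with finitely many critical values such that $W_f$ is identified with $G$ as a graph.

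The argument is largely formal given Theorem~\ref{thm3}; the only place that requires care is the realization step in the sufficiency direction, where one must verify that an algebraic epimorphism of fundamental groups really does come from a continuous map. This is the step I would write out in slightly more detail: choose a basepoint $x_0 \in M$, define $Q$ on the $1$--skeleton of some CW structure on $M$ by sending generating loops of $\pi_1(M, x_0)$ to representative loops of their $\phi$--images in $G$; the relations in $\pi_1(M, x_0)$ go to trivial elements under $\phi$, so $Q$ extends over $2$--cells, and the vanishing of $\pi_n(G)$ for $n \geq 2$ lets one extend across all higher cells without obstruction.
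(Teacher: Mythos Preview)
Your proposal is correct and matches the paper's intent: the paper states this as an ``immediate corollary to Theorem~\ref{thm3}'' and gives no proof, so your argument simply fills in the details the author left implicit. Both directions are handled correctly, including the one point requiring care---realizing the algebraic epimorphism by a continuous map via the asphericity of $G$---which is exactly the standard ingredient one needs to invoke Theorem~\ref{thm3}.
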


\begin{rem}
About realization of Reeb graphs, 
there have been a lot of studies, e.g.\ by
Sharko \cite{Sh}, Mart\'\i nez-Alfaro, Meza-Sarmiento and Oliveira
\cite{MMO1, MMO2, MMO3}, Masumoto and Saeki \cite{MS}, Gelbukh \cite{G1,
G2, G3, G4}, Kaluba, Marzantowicz and Silva \cite{KMS},
Michalak \cite{M1, M2}, Michalak
and Marzantowicz \cite{MM}, 
Kitazawa \cite{K1, K2, K3}, etc.
Our theorems generalize some of them.
\end{rem}

\section*{Acknowledgment}\label{ack}
The author would like to thank Dr.\ Naoki Kitazawa and
Dr.\ Dominik Wrazidlo
for helpful discussions.
This work was supported by JSPS KAKENHI Grant Number 
JP17H06128. This work was also supported by the Research  
Institute for Mathematical Sciences, an International Joint
Usage/Research Center located in Kyoto University. 


\end{document}